\documentclass[11pt,twoside,english]{article}
\usepackage{amsmath}
\usepackage{amsfonts}
\usepackage{mathrsfs}
\usepackage{color}
\usepackage{ amsmath, amsfonts, amssymb, amsthm, amscd}
\usepackage[T1]{fontenc}
\usepackage[english]{babel}
\usepackage[noinfoline]{imsart}



\setlength{\oddsidemargin}{5mm} \setlength{\evensidemargin}{5mm}
\setlength{\textwidth}{150mm} \setlength{\headheight}{0mm}
\setlength{\headsep}{12mm} \setlength{\topmargin}{0mm}
\setlength{\textheight}{220mm} \setcounter{secnumdepth}{2}

\newtheorem{theorem}{Theorem}
\newtheorem{lemma}{Lemma}
\newtheorem{proposition}{Proposition}

\newtheorem{example}{Example}
\newtheorem{definition}{Definition}

\newtheorem{remark}{Remark}

\newcommand{\cF}{\ensuremath{\mathcal F}}

\newcommand{\cH}{\ensuremath{\mathcal H}}

\newcommand{\cO}{\ensuremath{\mathcal O}}
\newcommand{\cP}{\ensuremath{\mathcal P}}

\newcommand{\cR}{\ensuremath{\mathcal R}}


\newcommand{\bbN}{{\ensuremath{\mathbb N}} }

\newcommand{\bbR}{{\ensuremath{\mathbb R}} }


\newcommand{\N}{\mathbb{N}}

\newcommand{\be}{\begin{equation}}
\newcommand{\ee}{\end{equation}}
\newcommand{\beq}{\begin{eqnarray}}
\newcommand{\eeq}{\end{eqnarray}}


\newcommand{\Ne}{\N^{\ast}}

\newcommand{\HH}{{\cal H}}

\newcommand{\OO}{{\cal O}}

\newcommand{\R}{\mathbb{R}}


\newcommand{\ced}{\end{proof}}

\newcommand{\p}{^{\prime}}

\setlength{\parindent}{0cm}

%

\begin{document}
\begin{frontmatter}
\title{The Obstacle Problem for Quasilinear Stochastic PDEs with non-homogeneous operator}
\date{}
\runtitle{}
\author{\fnms{Laurent}
 \snm{DENIS}\corref{}\ead[label=e1]{ldenis@univ-evry.fr}}
\thankstext{T1}{The work of the first and third author is supported by the chair \textit{risque de cr\'edit}, F\'ed\'eration bancaire Fran\c{c}aise}
\address{Universit\'e
d'Evry-Val-d'Essonne-FRANCE
\\\printead{e1}}
\author{\fnms{Anis}
 \snm{MATOUSSI}\corref{}\ead[label=e2]{anis.matoussi@univ-lemans.fr}}
\thankstext{t2}{The research of the second author was partially supported by the Chair {\it Financial Risks} of the {\it Risk Foundation} sponsored by Soci\'et\'e G\'en\'erale, the Chair {\it Derivatives of the Future} sponsored by the {F\'ed\'eration Bancaire Fran\c{c}aise}, and the Chair {\it Finance and Sustainable Development} sponsored by EDF and Calyon }
\address{
 LUNAM Université, Université du Maine - FRANCE \\\printead{e2}}

\author{\fnms{Jing}
 \snm{ZHANG}\corref{}\ead[label=e3]{jzhang@univ-evry.fr}}
\address{Universit\'e
d'Evry-Val-d'Essonne
-FRANCE\\\printead{e3}}

\runauthor{L. Denis, A. Matoussi and J. Zhang}

\begin{abstract}
We prove the existence and uniqueness of
solution  of the obstacle problem for quasilinear Stochastic PDEs with non-homogeneous second order operator.
Our method is based on analytical technics coming from the parabolic potential theory. The solution
is expressed as a pair $(u,\nu)$  where $u$ is a predictable
continuous process which takes values in a proper Sobolev space and
$\nu$ is a random regular measure satisfying minimal Skohorod
condition. Moreover, we establish a maximum principle for local solutions of such class of
stochastic PDEs. The proofs are based on a
version of It\^o's formula and estimates for the positive part of a
local solution which is non-positive on the lateral boundary.

\end{abstract}

\begin{keyword}
\kwd{parabolic potential, regular measure, stochastic partial differential equations, non-homogeneous second order operator, obstacle
problem, penalization method, It\^o's formula, comparison theorem, space-time white noise}
\end{keyword}
\begin{keyword}[class=AMS]
\kwd[Primary ]{60H15; 35R60; 31B150}
\end{keyword}

\end{frontmatter}

%
%
%
%
%
%
%
%
%
%
%
%
%

\section{Introduction}
In this paper we study the following SPDE with obstacle (in short OSPDE):
\begin{equation}\label{SPDEO}\left\{ \begin{split}&du_t(x)= \partial_i  \left(a_{i,j}(t,x)\partial_ju_t(x)  \right) dt +  \partial_i g_i(t,x,u_t(x),\nabla
u_t(x))dt+f(t,x,u_t(x),\nabla u_t(x))dt \\&\quad \ \  \
\ \ \ +\sum_{j=1}^{+\infty}h_j(t,x,u_t(x),\nabla
u_t(x))dB^j_t +\nu (t,dx), \\
&u_t\geq S_t \, , \ \ \\ &u_0=\xi\, .\
\end{split}
\right.
\end{equation} where $a$ is a time-dependant symmetric, uniformly elliptic, measurable matrix defined
on some open domain $\mathcal{O}\subset \bbR^d$, with null Dirichlet condition. The initial
condition is given as $u_0=\xi$, a $L^2(\mathcal{O})-$valued random
variable, and $f$, $g=(g_1,...,g_d)$ and $h=(h_1,...h_i,...)$ are
non-linear random functions. Given an obstacle $S: \Omega\times[0,T]\times \cO \rightarrow \R$, we study the obstacle problem for the SPDE
(\ref{SPDEO}), i.e. we want to find a solution of (\ref{SPDEO})  which
satisfies "$u\geq S$" where the obstacle $S$ is regular in some sense  and
controlled by the solution of a SPDE.\\
In recent work \cite{DMZ12} we have proved  in the homogeneous case, existence and uniqueness of the solution of equation (\ref{SPDEO}) with Dirichlet boundary condition under standard Lipschitz hypotheses  and $L^2$-type integrability conditions on the coefficients. Moreover in \cite{DMZ12max}, still in the homogeneous case, we have obtained  a maximum principle  for local solutions. In these papers we have assumed that $a$ does not depend on time and so many proofs are based on the notion of semigroup associated to the second order operator and on the regularizing property of the semigroup. The aim of this paper is to extend all the results to the non homogeneous case. \\
Let us recall that the solution is  a couple $(u,\nu )$, where $u$ is a process with values in the first order Sobolev space and $\nu$ is a random regular measure forcing $u$ to stay above $S$ and satisfying a minimal Skohorod condition. In order to give a rigorous meaning to the notion of solution, inspired by the works of M. Pierre in the deterministic case (see \cite{Pierre,PIERRE}), we introduce the notion of parabolic capacity.  We construct a solution which admits a quasi continuous version hence defined outside a polar set and use the fact  that regular measures which in general are not absolutely continuous w.r.t. the Lebesgue measure, do not charge polar sets.\\

 There is a huge literature on parabolic SPDE's without obstacle. The study of the $L^p-$norms w.r.t. the randomness of the space-time uniform norm on the trajectories of a stochastic PDE was started by N. V. Krylov in \cite{Krylov} (see also  Kim \cite{Kim}),  for a more complete overview of existing works on this subject see \cite{DMS09,DM11} and the references therein.   Let us also mention that some  maximum principle have been established by  N. V. Krylov \cite{Krylov2}  for linear parabolic spde's on Lipschitz domain.
 Concerning the obstacle problem, there are two approaches, a probabilistic one (see \cite{MatoussiStoica, Klimsiak}) based on the Feynmann-Kac's formula via the backward doubly stochastic differential equations and the analytical one (see \cite{DonatiPardoux,NualartPardoux,XuZhang}) based on the Green function.\\

The main results of this paper are first an existence and uniqueness Theorem for the solution with null Dirichlet condition and a maximum principle for local solutions. This yields for example:
\begin{theorem}
Let $(M_t)_{t\geq 0}$ be an Itô process satisfying some
integrability conditions, $p\geq 2$ and $u$ be a local weak
solution of the obstacle problem (\ref{SPDEO}). Assume that $\partial\cO$ is Lipschitz and $u\leq M$ on $\partial\cO$,
 then for all $t\in [0,T]$: $$ E\left\| \left( u-M\right)
^{+}\right\| _{\infty ,\infty ;t}^p\le k\left( p,t\right) \mathcal{C}(S, f, g, h, M)$$
where $\mathcal{C}(S, f, g, h, M)$ depends only on the barrier $S$, the initial condition $\xi$, coefficients $f,g,h$, the boundary condition  $ M$ and $k$ is a function which only depends
on $p$ and $t$, $\Vert \cdot \Vert _{\infty
,\infty ;t}$ is the uniform norm on $[0,t]\times {\cal O}$.\end{theorem}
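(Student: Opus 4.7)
The strategy is to mimic the maximum-principle argument of~\cite{DMZ12max} developed in the homogeneous case, substituting for the heat semigroup the non-homogeneous It\^o formula established earlier in this paper. Writing the It\^o decomposition of $M$ as $dM_t = \alpha_t\,dt + \sum_j \beta_t^j\,dB^j_t$ and setting $v_t := u_t - M_t$, one sees by subtraction from~(\ref{SPDEO}) that $v$ is a local solution of an OSPDE of the same form with unchanged operator $\partial_i(a_{ij}\partial_j\cdot)$ and regular measure $\nu$, modified coefficients $\tilde f = f(\cdot,v+M,\nabla v)-\alpha$, $\tilde g = g(\cdot,v+M,\nabla v)$, $\tilde h = h(\cdot,v+M,\nabla v)-\beta$ (with the same Lipschitz constants), and obstacle $\tilde S := S-M$. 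The boundary hypothesis $u\le M$ on $\partial\cO$ becomes $v\le 0$ on $\partial\cO$, placing $v_t^+$ in $H^1_0(\cO)$ for a.e.~$t$, which makes $((v-k)^+)^{2p-1}$ an admissible test function for any truncation level $k\ge 0$.

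Next, pick $k\ge \|(S-M)^+\|_{\infty,\infty;T}$ and apply the non-homogeneous It\^o formula to $\|(v_t-k)^+\|_{L^2(\cO)}^{2p}$. The crucial point is that by the minimal Skorohod condition the measure $\nu$ is carried by the (quasi-continuous) contact set $\{u=S\}$, where $v-k = S-M-k\le 0$, so that
\[
\int_0^t\!\!\int_\cO \big((v_s-k)^+\big)^{2p-1}\,\nu(ds,dx)\;=\;0.
\]
This level-shift cleanly removes the contribution of the a priori singular random measure, which is the main new feature compared with the non-obstacle maximum principle. Uniform ellipticity of $a$ then yields a coercive gradient term $c_p\,E\!\int_0^t\!\int_\cO ((v-k)^+)^{2p-2}|\nabla(v-k)^+|^2$; the Lipschitz bounds on $(\tilde f,\tilde g,\tilde h)$ combined with Young's inequality absorb the $\tilde g$-gradient contribution and leave lower-order terms controlled by $E\!\int_0^t\|(v-k)^+\|_{L^2}^{2p}$; the martingale is handled by Burkholder--Davis--Gundy, and a stochastic Gronwall step produces an $L^p_\omega(L^\infty_t L^2_x)$ estimate for $(v-k)^+$.

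To upgrade this $L^2$-in-space estimate to the $L^\infty_{t,x}$ bound demanded by the theorem I would run a Moser iteration along a geometric sequence of truncation levels $k_n\uparrow\infty$, combining the coercive gradient term of the previous step with a parabolic Sobolev embedding, exactly as in the stochastic De~Giorgi--Moser scheme of~\cite{DMZ12max}. Unwinding the iteration and collecting constants produces the claimed estimate, with $\mathcal{C}(S,f,g,h,M)$ absorbing $\|(S-M)^+\|_{\infty,\infty;T}$, the initial data $\xi$, and appropriate norms of $f,g,h,\alpha,\beta$. The main difficulty, in my view, is executing this iteration in the non-autonomous stochastic setting without the regularising action of a semigroup: one must rely purely on the coercivity of $a(t,\cdot)$ and on the non-homogeneous It\^o formula, while carefully tracking the dependence of the constants on the iteration index and on $p$ so that the scheme closes. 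By contrast, the regular measure $\nu$, which might have been feared as the principal obstruction since it is not absolutely continuous with respect to Lebesgue measure, is handled cheaply by the level-shift trick above.
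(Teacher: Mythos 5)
Your opening move (subtract $M$, note that $v=u-M$ solves an OSPDE of the same type with obstacle $S-M$ and with $v\le 0$ on $\partial\cO$) is exactly the reduction the paper performs to pass from Theorem \ref{maxprinc} to Theorem \ref{maintheo}. You have also correctly identified the heart of the matter: because of the minimal Skorohod condition, $\nu$ is carried by $\{\tilde u=S\}$, and one must engineer a shift that makes the quantity being powered vanish on that set so that the contribution of the regular measure disappears. That is the right observation.

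However, there are two concrete problems with the way you implement it.

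First, you kill $\nu$ by subtracting a \emph{constant} level $k\ge\|(S-M)^+\|_{\infty,\infty;T}$. This quantity is random and, under the paper's assumptions, is not guaranteed to be finite, let alone to have a $p$-th moment. The hypotheses {\bf (OL)}, {\bf (HO$\mathbf{\infty p}$)} and {\bf (HD$\mathbf{\theta p}$)} control the \emph{data} $(S'_0,f',g',h')$ of the dominating linear SPDE solution $S'$ with $S\le S'$, not the sup-norm of $S-M$ itself. The paper's device is to shift by the \emph{process} $S'$ rather than by a constant: since $S\le S'$, the obstacle of $u-S'$ is $S-S'\le 0$, so $(u-S')^+$ never sees $\nu$ at all, and one then writes $u^+\le (u-S')^+ + (S')^+$ (with $u\mapsto u-M$, $S'\mapsto S'-M$, etc., in the $M$-version). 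The shifted coefficients $\bar f,\bar g,\bar h$ are precisely the coefficients driving $u-S'$, which is why they appear, together with $(f',g',h')$, in the estimates of Theorems \ref{maxprinc} and \ref{maintheo}. Your constant-level variant would at best produce a bound in terms of $E\|(S-M)^+\|^p_{\infty,\infty;T}$, which is not an admissible right-hand side.

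Second, once the measure has been removed, the paper does not redo a De Giorgi/Moser scheme from scratch; it invokes the $L^\infty$ maximum principle already established for quasilinear SPDEs \emph{without} obstacle (in \cite{DM11} and \cite{DMS09}, non-homogeneous version) applied separately to $(u-S')^+$ and to $(S')^+$. Your description of the iteration (``a geometric sequence of truncation levels $k_n\uparrow\infty$'') is in any case not how either scheme works: Moser iterates exponents, De Giorgi iterates levels converging to a finite threshold; levels tending to $+\infty$ cannot produce a finite sup bound. So even granting the reduction, the way you propose to close the $L^\infty_{t,x}$ estimate is not viable as stated. The correct route is the decomposition through $S'$ followed by the existing non-obstacle maximum principle, which is what the paper's proof (following step by step Theorem 5.5 and 5.6 of \cite{DMZ12max}) carries out.
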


\section{Hypotheses and preliminaries}
\subsection{Settings}
Let ${\cal O}$ be an open bounded domain in $\mathbb{{ R}}^d.$ The space $%
L^2\left( {\cal O}\right) $ is the basic Hilbert space of our
framework and
we employ the usual notation for its scalar product and its norm,%
$$ \left( u,v\right) =\int_{{\cal O}}u\left( x\right) v\left(
x\right) dx,\;\left\| u\right\|=\left( \int_{{\cal O}}u^2\left(
x\right) dx\right) ^{\frac 12}. $$ In general, we shall extend the
notation
\[ (u,v)=\int_{\cO} u(x)v(x)\, dx,\]
where $u$, $v$ are measurable functions defined on $\cO$ such that $uv \in L^1 (\cO )$.\\
The first order Sobolev space
of functions vanishing at the
boundary will be denoted as usual by $H_0^1\left( {\cal O}%
\right) .$ Its natural scalar product and norm are%
$$
\left( u,v\right) _{H_0^1\left( {\cal O}\right) }=\left( u,v\right) +\sum_{i=1}^d\int_{%
{\cal O}} \partial _i u\left( x\right)
\partial _iv\left( x\right) dx,\,\left\| u\right\| _{H_0^1\left(
{\cal O}\right) }=\left( \left\| u\right\| _2^2+\left\| \nabla
u\right\| _2^2\right) ^{\frac 12}. $$ We shall denote  by  $
H_{loc}^1 (\cO)$ the space of functions which are locally square
integrable in $ \mathcal{O}$ and which admit first order derivatives
that are also locally square integrable.\\
Another Hilbert space that we use is the second order Sobolev space $H^2_0 (\cO )$ of  functions vanishing at the
boundary and twice differentiable in the weak sense.

We consider a sequence $((B^i(t))_{t\geq0})_{i\in\mathbb{N}^*}$ of
independent Brownian motions defined on a standard filtered
probability space $(\Omega,\mathcal{F},(\mathcal{F}_t)_{t\geq0},P)$
satisfying the usual conditions.

Let $a$ be a measurable and $d\times d$ symmetric matrix defined on $\bbR^+\times\cO$. We assume that
there exist positive constants $\lambda$, $\Lambda$ and $M$ such that for all $\eta\in\bbR^d$ and
almost all $(t,x)\in\bbR^+\times\cO$:
\begin{equation}\label{operator}
\lambda|\eta|^2\leq\sum_{i,j}a_{i,j}(t,x)\eta^i\eta^j\leq\Lambda|\eta|^2\ and\ |a_{i,j}(t,x)|\leq M.
\end{equation}
Let $\triangle=\{(t,x,s,y)\in\bbR^+\times\cO\times\bbR^+\times\cO;\ t>s\}$. We denote by $G:\triangle\rightarrow\bbR^+$ the weak fundamental solution
of the problem
\begin{equation}\label{weak:fundamental}
\partial_tG(t,x;s,y)-\sum_{i=1}^d\partial_ia_{i,j}(t,x)\partial_jG(t,x;s,y)=0
\end{equation}
with Dirichlet boundary condition $G(t,x;s,y)=0$, for all $(t,x)\in(s,+\infty)\times\partial\cO$.

We consider the quasilinear stochastic partial differential equation
(1) with initial condition $u(0,\cdot)=\xi(\cdot)$ and Dirichlet
boundary condition $u(t,x)=0,\ \forall\ (t,x)\in
\bbR^+\times\partial\mathcal{O}$.

We assume that we have predictable random
functions\begin{eqnarray*}&&f:\bbR^+\times\Omega\times\mathcal{O}\times
\bbR\times \bbR^d\rightarrow
\bbR,\\&&g=(g_1,...,g_d):\bbR^+\times\Omega\times\mathcal{O}\times \bbR\times
\bbR^d\rightarrow
\bbR^d,\\&&h=(h_1,...,h_i,...):\bbR^+\times\Omega\times\mathcal{O}\times
\bbR\times \bbR^d\rightarrow \bbR^{\mathbb{N}^*},\end{eqnarray*}In the sequel,
$|\cdot|$ will always denote the underlying Euclidean or $l^2-$norm.
For
example$$|h(t,\omega,x,y,z)|^2=\sum_{i=1}^{+\infty}|h_i(t,\omega,x,y,z)|^2.$$

\textbf{Assumption (H):} There exist non-negative constants $C,\
\alpha,\ \beta$ such that for almost all $\omega$, the following
inequalities hold for all
$(x,y,z,t)\in\mathcal{O}\times\mathbb{R}\times\mathbb{R}^d\times\mathbb{R}^+$:\begin{enumerate}
\item   $|f(t,\omega,x,y,z)-f(t,\omega,x,y',z')|\leq C(|y-y'|+|z-z'|),$
\item $(\sum_{i=1}^d|g_i(t,\omega,x,y,z)-g_i(t,\omega,x,y',z')|^2)^{\frac{1}{2}}\leq
C|y-y'|+\alpha|z-z'|,$
\item $(|h(t,\omega,x,y,z)-h(t,\omega,x,y',z')|^2)^{\frac{1}{2}}\leq
C|y-y'|+\beta|z-z'|,$
\item the contraction property: $2\alpha+\beta^2<2\lambda.$
\end{enumerate}
\begin{remark} This last contraction property ensures existence and uniqueness for the solution of the SPDE without obstacle (see \cite{DM11}).
\end{remark}

Moreover for simplicity, we fix a terminal time $T >0$, we assume that: \\[0.3cm]
 \textbf{Assumption (I):} $$\xi\in L^2(\Omega\times\mathcal{O})\ is\ an\
\mathcal{F}_0-measurable\ random\
variable$$$$f(\cdot,\cdot,\cdot,0,0):=f^0\in
L^2([0,T]\times\Omega\times\mathcal{O};\bbR)$$$$g(\cdot,\cdot,\cdot,0,0):=g^0=(g_1^0,...,g_d^0)\in
L^2([0,T]\times\Omega\times\mathcal{O};\bbR^d)$$$$h(\cdot,\cdot,\cdot,0,0):=h^0=(h_1^0,...,h_i^0,...)\in
L^2([0,T]\times\Omega\times\mathcal{O};\bbR^{\mathbb{N}^*}).$$
We denote by $\cH_T$ the space of $H_0^1(\cO)-$valued predictable $L^2(\cO)-$continuous processes $(u_t)_{t\in[0,T]}$ which satisfy
$$\| u\|_T=E\sup_{t\in[0,T]}\parallel u_t\parallel^2+E\int_0^T\parallel\nabla u_t\parallel^2dt<+\infty.$$
It is the natural space for solutions.\\
The space of test functions is denote by  $\mathcal{D}=\mathcal{C}%
_{c}^{\infty } (\R^+ )\otimes \mathcal{C}_c^2 (\cO )$, where $\mathcal{C}%
_{c}^{\infty } (\R^+ )$ is the space of all real valued infinitely
differentiable  functions with compact support in $\mathbb{R}^+$ and
$\mathcal{C}_c^2 (\cO )$ the set
of $C^2$-functions with compact support in $\cO$.
\subsubsection{Main example of stochastic noise}
\label{mainex} Let $W$ be a
 noise white in time and colored in space, defined on a standard filtered probability
 space  $ \big( \Omega,\cF, (\cF_t)_{t \geq 0}, P \, \big)$ whose covariance function is given by:
 \[ \forall s,t\in\R_+ ,\ \forall x,y\in\cO,\ \ E[\dot{W} (x,s)\dot{W}(y,t)]=\delta (t-s)k(x,y),\]
 where $k:\cO\times \cO  \mapsto \R_+$ is a symmetric and measurable function.\\
 Consider the following SPDE driven by $W$:
 {\small
\begin{equation}
\begin{split}
\label{eW}
  du_t (x)  =
 \big( \sum_{i,j=1}^d \partial_i a_{i,j}(t,x)\partial_j u_t (x)
+f(t,x,u_t (x),\nabla u_t (x)) &+  \sum_{i=1}^d
\partial_i
g_i(t,x,u_t(x),\nabla u_t (x)) \big) dt\\
 &  \hspace{-1 cm}+ \tilde{h} (t,x,u_t(x),\nabla u_t(x))\, W(dt,x),\\
\end{split}
\end{equation}  }
where $f$ and $g$ are as above and $\tilde{h}$ is a random real valued function.\\
We assume that the covariance function $k$ defines a trace class operator
 denoted by $K$ in $L^2 (\mathcal{O})$. It is well known  that there exists an orthogonal
 basis $(e_i )_{i\in \Ne}$ of $L^2 (\cO )$ consisting of eigenfunctions of
 $K$ with corresponding eigenvalues $(\lambda_i )_{i\in\Ne}$ such that
 \[ \sum_{i=1}^{+\infty} \lambda_i <+\infty ,\]
 and
 \[ k(x,y)=\sum_{i=1}^{+\infty} \lambda_i e_i (x)e_i (y).\]
It is also well known that there exists a sequence $((B^i
(t))_{t\geq 0})_{i\in\Ne}$ of
 independent standard Brownian motions such that
 \[ W(dt, \cdot )=\sum_{i=1}^{+\infty}\lambda_i^{1/2} e_i B^i (dt).\]
 So that equation \eqref{eW} is equivalent to equation \eqref{SPDEO} without obstacle and with $h=(h_i)_{i\in\Ne}$ where
 $$\forall i\in\Ne,\ h_i (s,x, y,z)=\sqrt{\lambda_i}\tilde{h}(s,x,y,z)e_i (x).$$
Assume as in \cite{SW} that for all $i\in\Ne$,  $\| e_i \|_{\infty} <+\infty $ and
$$\sum_{i=1}^{+\infty} \lambda_i \| e_i \|_{\infty}^2 <+\infty.$$
Since
 $$  \Big(| h(
t,\omega,x,y,z) -h( t,\omega,x,y{\p},z{\p})
|^2\Big)^{\frac{1}{2}}\leq\left( \sum_{i=1}^{+\infty} \lambda_i \| e_i \|_{\infty}^2\right) \left| \tilde{h} (t,x,y,z)-\tilde{h}(t,x,y{\p},z{\p})\right|^2,$$
$h$ satisfies the Lipschitz hypothesis {\bf (H)-(ii)} if  $\tilde{h}$ satisfies a similar  Lipschitz hypothesis.

\subsection{Parabolic potential analysis}\label{capacity}
In this section we will recall some important definitions and
results concerning the obstacle problem for parabolic PDE in
\cite{Pierre} and \cite{PIERRE}.
\\$\mathcal{K}$ denotes $L^\infty([0,T];L^2(\mathcal{O}))\cap
L^2([0,T];H_0^1(\mathcal{O}))$ equipped with the norm:
\begin{eqnarray*}\parallel
v\parallel^2_\mathcal{K}&=&\parallel
v\parallel^2_{L^\infty([0,T];L^2(\mathcal{O}))}+\parallel
v\parallel^2_{L^2([0,T];H_0^1(\mathcal{O}))}\\
&=&\sup_{t\in[0,T[}\parallel v_t\parallel^2 +\int_0^T \left(
\parallel v_t \parallel^2  +\parallel \nabla v_t\parallel^2\right)\, dt
.\end{eqnarray*} $\mathcal{C}$ denotes the space of continuous
functions on compact support in $[0,T[\times\mathcal{O}$ and
finally:
$$\mathcal {W}=\{\varphi\in L^2([0,T];H_0^1(\mathcal{O}));\ \frac{\partial\varphi}{\partial t}\in
L^2([0,T];H^{-1}(\mathcal{O}))\}, $$ endowed with the
norm$\parallel\varphi\parallel^2_{\mathcal {W}}=\parallel
\varphi\parallel^2_{L^2([0,T];H_0^1(\mathcal{O}))}+\parallel\displaystyle\frac{\partial
\varphi}{\partial t}\parallel^2_{L^2([0,T];H^{-1}(\mathcal{O}))}$. \\
It is known (see \cite{LionsMagenes}) that $\mathcal{W}$ is
continuously embedded in $C([0,T]; L^2 (\cO))$, the set of $L^2 (\cO
)$-valued continuous functions on $[0,T]$. So without ambiguity, we
will also consider
$\mathcal{W}_T=\{\varphi\in\mathcal{W};\varphi(T)=0\}$,
$\mathcal{W}^+=\{\varphi\in\mathcal{W};\varphi\geq0\}$,
$\mathcal{W}_T^+=\mathcal{W}_T\cap\mathcal{W}^+$.\\
We now introduce the notion of parabolic potentials and regular measures which permit to define the parabolic capacity.
\begin{definition}
An element $v\in \mathcal{K}$ is said to be a {\bf parabolic potential} if it satisfies:
$$ \forall\varphi\in\mathcal{W}_T^+,\
\int_0^T-(\frac{\partial\varphi_t}{\partial
t},v_t)dt+\int_0^T\mathcal{E}(\varphi_t,v_t)dt\geq0.$$
We denote by $\mathcal{P}$ the set of all parabolic potentials.
\end{definition}
The next representation property is  crucial:
\begin{proposition}(Proposition 1.1 in \cite{PIERRE})\label{presentation}
Let $v\in\mathcal{P}$, then there exists a unique positive Radon
measure on $[0,T[\times\mathcal{O}$, denoted by $\nu^v$, such that:
$$\forall\varphi\in\mathcal{W}_T\cap\mathcal{C},\ \int_0^T(-\frac{\partial\varphi_t}{\partial t},v_t)dt+\int_0^T\mathcal{E}(\varphi_t,v_t)dt=\int_0^T\int_\mathcal{O}\varphi(t,x)d\nu^v.$$
Moreover, $v$ admits a right-continuous (resp. left-continuous)
version $\hat{v} \ (\makebox{resp. } \bar{v}): [0,T]\mapsto L^2
(\cO)$ .\\
Such a Radon measure, $\nu^v$ is called {\bf a regular measure} and we write:
$$ \nu^v =\frac{\partial v}{\partial t}+Av .$$
\end{proposition}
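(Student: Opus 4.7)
The plan is to obtain $\nu^v$ by a Riesz-type representation argument applied to the nonnegative linear functional that $v$ defines on test functions, and then to recover one-sided continuous versions of $v$ from the distributional identity $\partial_t v + Av = \nu^v$.

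First, I would define on $\mathcal{W}_T\cap\mathcal{C}$ the linear form
\[ L_v(\varphi) := -\int_0^T \left(\frac{\partial \varphi_t}{\partial t}, v_t\right)dt + \int_0^T \mathcal{E}(\varphi_t, v_t)\, dt, \]
which is well defined because $v\in\mathcal{K}$ has $v\in L^2([0,T];H_0^1(\cO))$ with $\partial_t\varphi\in L^2([0,T];H^{-1}(\cO))$. By the very definition of a parabolic potential, $L_v(\varphi)\geq 0$ whenever $\varphi\in\mathcal{W}_T^+$. The main technical step is then to promote $L_v$ to a positive linear functional on $C_c([0,T[\times\cO)$ that is continuous with respect to the uniform norm on each compact subset $K\sss [0,T[\times\cO$. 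For this, given any $\varphi\in C_c([0,T[\times\cO)$ with $\mathrm{supp}\,\varphi\subset K$, I would construct an explicit dominating function $\psi_K\in\mathcal{W}_T^+\cap\mathcal{C}$ (a smoothed indicator of a neighbourhood of $K$ cut off strictly before $T$) with $\psi_K\geq 1$ on $K$. Positivity of $L_v$ on $\mathcal{W}_T^+$ then yields $|L_v(\varphi)|\leq \|\varphi\|_\infty\, L_v(\psi_K)$, so $L_v$ is locally bounded in the uniform norm. A standard density argument, approximating any $\varphi\in C_c([0,T[\times\cO)$ by elements of $\mathcal{W}_T\cap\mathcal{C}$ supported in a fixed compact set, extends $L_v$ uniquely to $C_c([0,T[\times\cO)$ while preserving positivity.

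The Riesz--Markov representation theorem then produces a unique positive Radon measure $\nu^v$ on $[0,T[\times\cO$ satisfying the claimed identity on $\mathcal{W}_T\cap\mathcal{C}$. Uniqueness of $\nu^v$ is immediate from the density used above: any two representing measures must agree against all $\varphi\in C_c([0,T[\times\cO)$, hence are equal as Radon measures. This already justifies the symbolic writing $\nu^v=\partial_t v + Av$ in the distributional sense.

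For the existence of right- and left-continuous $L^2(\cO)$-valued versions, I would exploit monotonicity coming from the nonnegativity of $\nu^v$. Mollifying $v$ in time by a nonnegative kernel $\rho_\ve$ yields $v^\ve\in\mathcal{W}\cap C([0,T];L^2(\cO))$ satisfying $\partial_t v^\ve + Av^\ve = \nu^v * \rho_\ve\geq 0$, so the map $t\mapsto \frac{1}{2}\|v^\ve_t\|^2$ is, up to a drift controlled by $\mathcal{E}(v^\ve_t,v^\ve_t)$, monotone in a precise integral sense. Testing the distributional equation for $v$ against $\mathbf{1}_{[s,t]}v^\ve$ and passing to the limit in $\ve$ gives, for a.e. $s<t$,
\[ \tfrac{1}{2}\|v_t\|^2 - \tfrac{1}{2}\|v_s\|^2 + \int_s^t \mathcal{E}(v_r,v_r)\, dr = -\int_{[s,t)\times\cO} v\, d\nu^v, \]
where $v$ on the right is understood in a quasi-everywhere sense via the capacity introduced in the paper. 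Since the left-hand side sees $v$ only through $\|v\|^2$, which is bounded and has bounded variation on $[0,T[$, one can read off the existence of left and right limits in $L^2(\cO)$ at every point, yielding the desired versions $\hat v$ and $\bar v$.

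The main obstacle I anticipate is the extension step: one must produce cutoffs $\psi_K$ inside $\mathcal{W}_T$ (strict Dirichlet condition at $t=T$) that are comparable to $\mathbf{1}_K$ and for which $L_v(\psi_K)$ is finite, taking care that $K$ may touch $\partial\cO$ only through the spatial cutoff and never reaches $t=T$. The continuous-version part is also delicate because $\nu^v$ need not be absolutely continuous, so pointwise values of $v$ must be handled through the quasi-continuous representative rather than naive traces; this is exactly where the parabolic capacity framework becomes indispensable.
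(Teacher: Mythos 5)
This proposition is stated in the paper without proof; it is quoted verbatim from Pierre (Proposition 1.1 in \cite{PIERRE}), so there is no in-paper argument to compare against. Evaluating your sketch on its own merits:

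Your first half — turning the nonnegativity in the definition of $\mathcal{P}$ into a positive, locally uniformly bounded linear functional on $C_c([0,T[\times\cO)$ via cutoffs $\psi_K\in\mathcal{W}_T^+\cap\mathcal{C}$ with $\psi_K\geq 1$ on $K$, then invoking Riesz--Markov — is correct and is essentially Pierre's route. The two points you flag (finiteness of $L_v(\psi_K)$, the strict $t<T$ support constraint) are indeed the only technical points, and they are handled exactly as you say: $L_v(\psi_K)<\infty$ because $\psi_K\in\mathcal{W}_T$ and $v\in L^2([0,T];H_0^1)$, and the bound $|L_v(\varphi)|\le\|\varphi\|_\infty L_v(\psi_K)$ follows from $\|\varphi\|_\infty\psi_K\pm\varphi\in\mathcal{W}_T^+$.

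Your second half, however, has a genuine circularity. You propose to pass to the limit in an energy identity whose right-hand side is $\int_{[s,t)\times\cO}v\,d\nu^v$, and you say this is "understood in a quasi-everywhere sense via the capacity introduced in the paper." But the parabolic capacity of Section \ref{capacity}, and with it the notion of a quasi-continuous representative (Proposition \ref{approximation}), is \emph{built on} the existence of the regular measures $\nu^v$ that this very proposition is establishing. You cannot invoke quasi-continuity of $v$ to prove Proposition \ref{presentation}, since in Pierre's development the quasi-continuous representative and the extension of the pairing to general $\varphi\in\mathcal{W}$ come strictly afterwards. Moreover, $\nu^v$ need not be absolutely continuous, and $v$ is a priori only an $L^2$-class; the object $\int v\,d\nu^v$ is simply undefined at this stage.

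The standard non-circular route is linear rather than quadratic. Test the defining identity against $\theta(t)\phi(x)$ with $\phi\in C_c^\infty(\cO)$ and $\theta\in C_c^\infty([0,T[)$; this yields, in the sense of distributions on $[0,T[$,
\[
\frac{d}{dt}\,(v_t,\phi)\;=\;-\,\mathcal{E}(\phi,v_t)\;+\;\mu_\phi,
\]
where $\mu_\phi$ is the (signed, but with a favorable sign when $\phi\ge0$) measure obtained by disintegrating $\nu^v$ against $\phi$. Since $\mathcal{E}(\phi,v_\cdot)\in L^1_{loc}([0,T[)$ and $\mu_\phi$ is a Radon measure, $t\mapsto(v_t,\phi)$ has locally bounded variation on $[0,T[$ and hence one-sided limits at every point. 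Running $\phi$ over a countable dense family and using $v\in L^\infty([0,T];L^2(\cO))$ gives one-sided weak $L^2$-limits everywhere; upgrading these to strong $L^2$-limits is then done with a mollified energy estimate, but only on expressions of the form $\|v^\ve_t-v^\ve_s\|^2$ that never require pairing the limit $v$ itself against the singular measure $\nu^v$. This is the argument you should substitute for the final paragraph of your sketch; the rest stands.
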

\begin{remark} As a consequence, we can also define for all $v\in\mathcal{P}$:
$$ v_T =\lim_{t\uparrow T}\bar{v}_t \ \in L^2 (\cO ).$$
\end{remark}

\begin{definition}
Let $K\subset [0,T[\times\mathcal{O}$ be compact, $v\in\mathcal{P}$
is said to be  \textit{$\nu-$superior} than 1 on $K$, if there
exists a sequence $v_n\in\mathcal{P}$ with $v_n\geq1\ a.e.$ on a
neighborhood of $K$ converging to $v$ in
$L^2([0,T];H_0^1(\mathcal{O}))$.
\end{definition}
We denote:$$\mathscr{S}_K=\{v\in\mathcal{P};\ v\ is\ \nu-superior\
to\ 1\ on\ K\}.$$
\begin{proposition}(Proposition 2.1 in \cite{PIERRE})
Let $K\subset [0,T[\times\mathcal{O}$ compact, then $\mathscr{S}_K$
admits a smallest $v_K\in\mathcal{P}$ and the measure $\nu^v_K$
whose support is in $K$ satisfies
$$\int_0^T\int_\mathcal{O}d\nu^v_K=\inf_{v\in\mathcal{P}}\{\int_0^T\int_\mathcal{O}d\nu^v;\ v\in\mathscr{S}_K\}.$$
\end{proposition}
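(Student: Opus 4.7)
The plan is to follow the classical strategy from parabolic potential theory: build $v_K$ as the infimum of a minimizing sequence in $\mathscr{S}_K$, exploit a lattice property to produce a decreasing sequence, then identify the limit as the smallest element and analyze the support of its associated measure.

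First, I would establish the lattice structure of $\mathcal{P}$: for $v_1,v_2\in\mathcal{P}$ one has $v_1\wedge v_2\in\mathcal{P}$. This follows by writing the defining variational inequality and testing against $\varphi\in\mathcal{W}_T^+$ of the form $(v_1-v_2)^+\wedge\psi$ type cut-offs, or more directly by using the characterization of $\mathcal{P}$ as $\{v\in\mathcal{K}\,:\,\partial_t v+Av\geq 0\}$ in the sense of distributions. Combined with the obvious fact that $\nu$-superiority on $K$ is preserved by the min operation (if two sequences of potentials $\geq 1$ near $K$ converge in $L^2H^1_0$, their minima still do), this gives that $\mathscr{S}_K$ is a downward directed lattice.

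Next, I would take a minimizing sequence $(w_n)\subset\mathscr{S}_K$ realizing the infimum $\inf_{v\in\mathscr{S}_K}\int_0^T\!\!\int_\mathcal{O}d\nu^v$, and replace it by the decreasing sequence $v_n=w_1\wedge\cdots\wedge w_n\in\mathscr{S}_K$. Since the mass $\int d\nu^{v_n}$ is bounded and, after elementary estimates using the ellipticity of $a$ and the representation $\nu^{v_n}=\partial_t v_n+Av_n$, the sequence $(v_n)$ is bounded in $\mathcal{K}$, I would extract a weakly convergent subsequence in $L^2([0,T];H^1_0(\mathcal{O}))$ with weak-$\ast$ limit also in $L^\infty([0,T];L^2(\mathcal{O}))$. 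Monotone convergence plus lower semicontinuity of the $\mathcal{K}$-norm identifies the limit $v_K$ as the pointwise (decreasing) limit. The closedness of $\mathcal{P}$ under monotone limits of bounded sequences—passing to the limit in the defining variational inequality—gives $v_K\in\mathcal{P}$; preservation of $\nu$-superiority under the decreasing limit gives $v_K\in\mathscr{S}_K$. Minimality is immediate from the lattice construction: any $v\in\mathscr{S}_K$ dominates $v\wedge v_K\leq v_K$, forcing $v\geq v_K$ a.e.

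Finally, I would prove $\mathrm{supp}(\nu^{v_K})\subset K$ by a perturbation argument: for $\varphi\in\mathcal{W}_T^+\cap\mathcal{C}$ with $\mathrm{supp}(\varphi)\cap K=\emptyset$, the function $v_K-\varepsilon\varphi$ stays $\nu$-superior to $1$ on $K$ for small $\varepsilon>0$ (a neighborhood of $K$ avoids $\mathrm{supp}(\varphi)$), so by minimality $v_K-\varepsilon\varphi\notin\mathcal{P}$ unless $\int\varphi\,d\nu^{v_K}=0$; testing the defining identity of Proposition~\ref{presentation} for $v_K$ against such $\varphi$ yields $\int_0^T\!\int_\mathcal{O}\varphi\,d\nu^{v_K}=0$, hence the claim. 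The minimal-mass identity then follows because any competitor $v\in\mathscr{S}_K$ satisfies $v\geq v_K$, and since both measures are concentrated on (a neighborhood of) $K$ where $v_K$ is ``saturated'', comparison of the variational identities gives $\int d\nu^{v_K}\leq\int d\nu^{v}$. The main technical obstacle I expect is the last step—justifying that $v_K-\varepsilon\varphi$ is admissible and that one can legitimately use this local perturbation together with the minimality to kill the measure outside $K$, in particular handling the trace/initial conditions of the perturbation in $\mathcal{W}_T^+$ so that the variational inequality tests correctly.
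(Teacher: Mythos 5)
The paper does not prove this proposition: it is quoted verbatim as Proposition~2.1 of Pierre's \cite{PIERRE} and used as a black box. So there is no ``paper proof'' to compare against; what follows is an assessment of your argument on its own terms.

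Your overall skeleton — lattice structure of $\mathcal{P}$, a decreasing minimizing sequence, closure under monotone limits, a balayage-type perturbation for the support, then a comparison for minimal mass — is the standard strategy in potential theory, and it is broadly the right one. But there are genuine gaps in the way you execute it.

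The most serious is the \emph{minimality} step. You construct $v_K$ as the limit of a decreasing sequence $v_n = w_1\wedge\cdots\wedge w_n$ built from a \emph{mass}-minimizing sequence $(w_n)$, and then claim: ``any $v\in\mathscr{S}_K$ dominates $v\wedge v_K\leq v_K$, forcing $v\geq v_K$ a.e.'' This is a non sequitur. From $v\geq v\wedge v_K$ and $v\wedge v_K\leq v_K$ nothing at all follows about the relation between $v$ and $v_K$. The limit of a mass-minimizing sequence need not be the lattice infimum of $\mathscr{S}_K$: the map $v\mapsto\int d\nu^v$ is affine in $v$, not strictly convex, so ``smallest mass'' and ``smallest element'' are a priori different minimizers and you must link them. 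A clean fix is to define $v_K=\operatorname{ess\,inf}\{v:\ v\in\mathscr{S}_K\}$ directly (well-defined because the lattice property makes $\mathscr{S}_K$ downward directed), extract a decreasing sequence in $\mathscr{S}_K$ converging to it, show this limit is again in $\mathscr{S}_K$, and \emph{then} separately prove the mass identity. As written, you cannot simultaneously claim that $v_K$ realizes $\inf\int d\nu^v$ (built in by your choice of $(w_n)$) and that it is the smallest element (not established).

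Two further points need real arguments rather than assertions. First, the support step: ``by minimality $v_K-\varepsilon\varphi\notin\mathcal{P}$ unless $\int\varphi\,d\nu^{v_K}=0$'' does not follow; failure of the variational inequality for $v_K-\varepsilon\varphi$ against \emph{some} test function in $\mathcal{W}_T^+$ does not by itself yield $\int\varphi\,d\nu^{v_K}=0$. The standard route is a réduite/balayage argument on open neighborhoods shrinking to $K$, which requires a little more care than a linear perturbation $v_K-\varepsilon\varphi$ (which may leave $\mathcal{P}$ for unrelated reasons, namely where $A\varphi$ is large). Second, boundedness of the minimizing sequence in $\mathcal{K}$ is not ``elementary'': a bound on $\int d\nu^{v_n}$ does not immediately control $\|v_n\|_{L^2H_0^1}$ because $\int \tilde v_n\,d\nu^{v_n}$ involves $v_n$ itself, which is not bounded in $L^\infty$ a priori; one needs the positivity and monotonicity of $(v_n)$ together with a careful energy estimate using Proposition~\ref{approximation}, and this should be spelled out. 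Finally, the lattice closure $v_1\wedge v_2\in\mathcal{P}$ and the preservation of $\nu$-superiority under $\wedge$ and decreasing limits are invoked as obvious; in this weak ($\mathcal{K}$, $\mathcal{W}_T^+$) framework they deserve at least a sentence of justification, since the definition of $\mathscr{S}_K$ is a closure condition in $L^2([0,T];H_0^1)$ and passing $\wedge$ through strong $L^2H_0^1$ limits is a point one must check.
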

\begin{definition}(Parabolic Capacity)\begin{itemize}
                                        \item Let $K\subset [0,T[\times\mathcal{O}$ be compact, we define
$cap(K)=\int_0^T\int_\mathcal{O}d\nu^v_K$;
                                        \item let $O\subset
[0,T[\times\mathcal{O}$ be open, we define $cap(O)=\sup\{cap(K);\
K\subset O\ compact\}$;
                                        \item   for any borelian
$E\subset [0,T[\times\mathcal{O}$, we define $cap(E)=\inf\{cap(O);\
O\supset E\ open\}$.
                                      \end{itemize}

\end{definition}
\begin{definition}A property is said to hold quasi-everywhere (in short q.e.)
if it holds outside a set of null capacity.
\end{definition}
\begin{definition}(Quasi-continuous)

\noindent A function $u:[0,T[\times\mathcal{O}\rightarrow\mathbb{R}$
 is called quasi-continuous, if there exists a decreasing sequence of open
subsets $O_n$ of $[0,T[\times\mathcal{O}$ with: \begin{enumerate}
                        \item for all $n$, the restriction of $u_n$ to the complement of $O_n$ is
continuous;
                                       \item $\lim_{n\rightarrow+\infty}cap\;(O_n)=0$.
                                     \end{enumerate}
We say that $u$ admits a quasi-continuous version, if there exists
$\tilde{u}$ quasi-continuous  such that $\tilde{u}=u\ a.e.$.
\end{definition}
The next proposition, whose proof may be found in \cite{Pierre} or \cite{PIERRE} shall play an important role in the sequel:
\begin{proposition}\label{Versiont} Let $K\subset \cO$ a compact set, then $\forall t\in [0,T[$
$$cap (\{ t\}\times K)=\lambda_d (K),$$
where $\lambda_d$ is the Lebesgue measure on $\cO$.\\
As a consequence, if $u: [0,T[\times \cO\rightarrow \R$ is a map defined quasi-everywhere then it defines uniquely a map from $[0,T[$ into $L^2 (\cO)$.
In other words, for any $t\in [0,T[$, $u_t$ is defined without any ambiguity as an element in $L^2 (\cO)$.
Moreover, if $u\in \mathcal{P}$, it admits  version $\bar{u}$ which is left continuous on $[0,T]$ with values in $L^2 (\cO )$ so that $u_T =\bar{u}_{T^-}$ is also defined without ambiguity.
\end{proposition}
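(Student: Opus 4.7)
The heart of the statement is the capacity identity $cap(\{t\}\times K)=\lambda_d(K)$; the two consequences then follow almost for free. My plan is to prove the two inequalities separately, and then read off the corollaries.

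\textbf{Upper bound.} I would construct an explicit minimizing family. For $\epsilon>0$ pick $\phi_\epsilon\in H_0^1(\mathcal{O})$ nonnegative, at least $1$ on an $\epsilon$-neighborhood $K^\epsilon$ of $K$, and super-harmonic for $A$, chosen so that $A\phi_\epsilon$ is a nonnegative Radon measure of finite mass and $\int\phi_\epsilon\,dx\to\lambda_d(K)$ as $\epsilon\downarrow 0$ (one may take a regularization of the balayage of $\mathbf{1}_{K^\epsilon}$ for $A$). Let $u_\epsilon$ solve $\partial_s u+Au=0$ on $(t,T)\times\mathcal{O}$ with initial datum $\phi_\epsilon$ and null Dirichlet condition, written explicitly through the fundamental solution $G$. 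For small $\eta>0$ set
$$
v_{\epsilon,\eta}(s,x)=\phi_\epsilon(x)\mathbf{1}_{[t-\eta,t]}(s)+u_\epsilon(s,x)\mathbf{1}_{(t,T)}(s).
$$
A direct distributional computation of $\partial_s v+Av$ yields the nonnegative measure
$\nu^{v_{\epsilon,\eta}}=\phi_\epsilon(x)\,\delta_{t-\eta}(ds)\otimes dx+\mathbf{1}_{(t-\eta,t)}(s)\,ds\otimes A\phi_\epsilon(dx)$, so $v_{\epsilon,\eta}\in\mathcal{P}$; by construction $v_{\epsilon,\eta}\ge 1$ on an open neighborhood of $\{t\}\times K$ in $[0,T[\times\mathcal{O}$ (using $L^2$-continuity of $u_\epsilon$ for $s$ slightly above $t$), hence $v_{\epsilon,\eta}\in\mathscr{S}_{\{t\}\times K}$. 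Thus $cap(\{t\}\times K)\le \iint d\nu^{v_{\epsilon,\eta}}=\int\phi_\epsilon\,dx+\eta\,\|A\phi_\epsilon\|$; sending $\eta\downarrow 0$ then $\epsilon\downarrow 0$ gives the desired bound $\lambda_d(K)$.

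\textbf{Lower bound.} This is the main obstacle. Take any $v\in\mathscr{S}_{\{t\}\times K}$ with approximating $v_n\in\mathcal{P}$ such that $v_n\ge 1$ a.e.\ on an open neighborhood $U$ of $\{t\}\times K$ and $v_n\to v$ in $L^2([0,T];H_0^1(\mathcal{O}))$. The plan is to test the identity
$$\iint\varphi\,d\nu^{v_n}=\int_0^T\bigl(-\partial_s\varphi_s,v_{n,s}\bigr)\,ds+\int_0^T\mathcal{E}(\varphi_s,v_{n,s})\,ds$$
against a carefully localized $\varphi\in\mathcal{W}_T^+\cap\mathcal{C}$ of product form $\varphi(s,x)=\psi_\delta(s)\chi(x)$, where $\chi\in C_c^2(\mathcal{O})$ approximates $\mathbf{1}_K$ from above and $\psi_\delta$ is a smooth nonnegative function equal to $1$ at $s=t$, supported in a $\delta$-neighborhood of $t$, and vanishing near $T$. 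Integration by parts in $s$ produces a dominant term proportional to $(\bar v_n(t,\cdot),\chi)$ (invoking the left-continuous version supplied by Proposition~\ref{presentation}), which is bounded below by $\int_K\chi\,dx$ thanks to $v_n\ge 1$ on $U$ and the fact that slices of $U$ contain $K$. The elliptic term and the remaining time-derivative contributions vanish in the limit $\delta\downarrow 0$. Passing to $n\to\infty$ then $\chi\downarrow\mathbf{1}_K$ yields $\iint d\nu^v\ge\lambda_d(K)$, and taking the infimum over $v\in\mathscr{S}_{\{t\}\times K}$ gives the lower bound. The delicate point is choosing $\psi_\delta$ symmetric around $t$ so that, after the integration by parts, the contribution from $s<t$ cancels the one from $s>t$ up to lower-order terms that vanish with $\delta$.

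\textbf{Corollaries.} The slice capacity identity combined with the outer regularity of $cap$ forces any polar set $N\subset[0,T[\times\mathcal{O}$ to have Lebesgue-null $t$-slice $N_t$ for every $t\in[0,T[$; were $\lambda_d(N_t)>0$ for some $t$, one would produce a compact $K\subset N_t$ of positive Lebesgue measure and hence positive capacity contained in $N$, a contradiction. Therefore any $u$ defined quasi-everywhere has a canonical pointwise-in-time restriction $u_t$ well-defined as an a.e.\ equivalence class in $L^2(\mathcal{O})$. Finally, the statement that $u_T=\bar u_{T^-}$ for $u\in\mathcal{P}$ is a direct consequence of Proposition~\ref{presentation}: $\bar u$ is a left-continuous $L^2(\mathcal{O})$-valued representative on $[0,T]$, so its value at $T$ is unambiguously the left limit $\bar u_{T^-}$.
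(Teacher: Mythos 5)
The paper itself does not prove this proposition: it states explicitly that the proof may be found in Pierre's works \cite{Pierre,PIERRE}, so there is no ``paper's proof'' to compare against beyond that citation. Your two-sided strategy (upper bound by an explicit competitor, lower bound by testing the measure representation against a localized test function) is the natural one, and the treatment of the two corollaries is essentially correct, modulo the fact that outer regularity of $cap$ and monotonicity on Borel sets are used but not established in the paper (they are in Pierre's papers). However, the two bounds as written both have genuine gaps.

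For the upper bound, you require $\phi_\epsilon$ to be ``super-harmonic for $A$'' so that $A\phi_\epsilon$ is a nonnegative measure, but in this paper the operator is \emph{non-homogeneous}: on the time interval $(t-\eta,t)$ the relevant operator is $A_s=-\partial_i(a_{ij}(s,\cdot)\partial_j\cdot)$, which varies with $s$. For $\nu^{v_{\epsilon,\eta}}$ to be a nonnegative measure you need $A_s\phi_\epsilon\geq 0$ for all $s\in(t-\eta,t)$ simultaneously, which is not something a single balayage for one elliptic operator can provide. Either you need to argue this is achievable under the boundedness and ellipticity assumptions \eqref{operator}, or you need a different construction that avoids extending by $\phi_\epsilon$ on $[t-\eta,t]$ altogether (which is then in tension with the need for $v_{\epsilon,\eta}\geq 1$ a.e.\ on a two-sided open neighborhood of $\{t\}\times K$). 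You also need $u_\epsilon$, the solution for $s>t$, to stay $\geq 1$ on a neighborhood of $K$ for $s$ slightly above $t$; this uses continuity of the weak solution (De~Giorgi--Nash--Moser in the measurable-coefficient setting) and should be stated.

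The lower bound has a sharper problem. With a $\psi_\delta$ supported near $t$ and peaking at $1$ there, the term $-\int\psi_\delta'(s)\,(\chi,v_{n,s})\,ds$ splits into a nonpositive contribution from $(t-\delta,t)$ (where $\psi_\delta'\geq 0$) and a nonnegative contribution from $(t,t+\delta)$. You claim these ``cancel up to lower-order terms,'' invoking symmetry, but no such cancellation is available: $(\chi,v_{n,s})$ is not a priori symmetric or even continuous in $s$ near $t$, and as $\delta\downarrow 0$ the quantity $-\int\psi_\delta'(\chi,v_n)$ converges to the jump of $(\chi,v_n)$ at $t$ (a Stieltjes mass), not to $(\chi,\bar v_n(t,\cdot))$. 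The dominant term you announce does not emerge from this computation. An alternative choice $\psi_\delta\equiv 1$ on $[0,t]$ decaying on $[t,t+\delta]$ does produce the desired $-\int\psi_\delta'(\chi,v_n)\geq\lambda_d(K)$, but then the elliptic term $\int\psi_\delta\,\mathcal{E}(\chi,v_n)\,ds$ no longer vanishes as $\delta\downarrow 0$ and can have either sign, so this variant is also incomplete. In short, you have correctly identified the delicate point, but the argument as written does not resolve it, and I do not see how to close it along the lines you sketch without some additional structural input (for instance, exploiting that the minimizer $v_K$ has $\nu^{v_K}$ supported exactly in $\{t\}\times K$, or using the penalized approximants $v^n$ from \eqref{eq:1}, which is closer to what Pierre actually does).
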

\begin{remark} The previous proposition applies if for example $u$ is quasi-continuous.
\end{remark}

\begin{proposition}\label{approximation}(Theorem III.1 in \cite{PIERRE})
If $\varphi \in\mathcal{W}$, then it admits a unique quasi-continuous version that we denote by $\tilde{\varphi}$. Moreover, for all $v\in \mathcal{P}$,
the following relation holds:
$$\int_{[0,T[\times \cO} \tilde{\varphi}d\nu^v =\int_0^T \left( -\partial_t \varphi ,v\right)+\mathcal{E}(\varphi ,v)\, dt +\left( \varphi_T ,v_{T}\right) .$$
\end{proposition}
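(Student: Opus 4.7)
The strategy I would follow is Pierre's parabolic potential one: construct $\tilde{\varphi}$ via a Chebyshev-type capacity estimate combined with density of smooth functions in $\mathcal{W}$, then establish the identity first for smooth $\varphi$ and extend by approximation, exploiting that regular measures do not charge polar sets so that $\int\tilde{\varphi}\,d\nu^{v}$ is unambiguous. First I would prove the key estimate
\[ \mathrm{cap}\bigl(\{|\psi|>\lambda\}\bigr)\le \frac{C}{\lambda^{2}}\,\|\psi\|_{\mathcal{W}}^{2},\qquad \psi\in\mathcal{W}\cap\mathcal{C},\ \lambda>0. \]
Given a compact $K\subset\{|\psi|>\lambda\}$, a natural parabolic potential majorising $|\psi|/\lambda$ on a neighbourhood of $K$ is provided by the solution of a parabolic variational inequality with obstacle $|\psi|/\lambda$; comparison with the minimal element $v_{K}\in\mathscr{S}_{K}$ from Proposition 2.1 of \cite{PIERRE} yields the stated bound after controlling the total mass of the associated regular measure by $\|\psi\|_{\mathcal{W}}^{2}/\lambda^{2}$. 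Using the estimate, I would pick $\varphi_{n}\in\mathcal{W}\cap\mathcal{C}$ with $\|\varphi_{n+1}-\varphi_{n}\|_{\mathcal{W}}\le 2^{-n}$ (density), set $O_{n}=\bigcup_{k\ge n}\{|\varphi_{k+1}-\varphi_{k}|>2^{-k/2}\}$, and deduce by subadditivity that $\mathrm{cap}(O_{n})\to 0$; then $(\varphi_{n})$ converges uniformly on each $O_{n}^{c}$ to a limit $\tilde{\varphi}$ that is quasi-continuous and equals $\varphi$ a.e. Uniqueness is routine: two quasi-continuous versions agree off a set which is both Lebesgue-null and covered by open sets $\{|\tilde{\varphi}_{1}-\tilde{\varphi}_{2}|>1/k\}$ of vanishing capacity.

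Next, in order to make sense of $\int_{[0,T[\times\cO}\tilde{\varphi}\,d\nu^{v}$, I would verify that $\nu^{v}$ does not charge polar sets. By outer regularity of the capacity, it suffices to show $\nu^{v}(O)\to 0$ along open sets $O$ with $\mathrm{cap}(O)\to 0$. For $K\subset O$ compact and $v_{K}\in\mathscr{S}_{K}$ the minimal element, applying Proposition \ref{presentation} to $v$ against a test function fabricated from $v_{K}$ (after suitable truncation to land in $\mathcal{W}_{T}\cap\mathcal{C}$) bounds $\nu^{v}(K)$ by a constant depending on $v$ times $\mathrm{cap}(K)$; inner regularity of the Radon measure $\nu^{v}$ then gives the claim.

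For the identity I would first treat $\varphi\in\mathcal{W}\cap\mathcal{C}$: multiplying by a smooth temporal cutoff $\eta_{\varepsilon}$ equal to one on $[0,T-\varepsilon]$ and vanishing at $T$ places $\varphi\eta_{\varepsilon}\in\mathcal{W}_{T}\cap\mathcal{C}$, so Proposition \ref{presentation} applies directly. The term $\int_{0}^{T}(-\partial_{t}(\varphi\eta_{\varepsilon}),v)\,dt$ splits as $\int_{0}^{T}\eta_{\varepsilon}(-\partial_{t}\varphi,v)\,dt - \int_{0}^{T}\eta_{\varepsilon}'(\varphi,v)\,dt$, and since $-\eta_{\varepsilon}'$ concentrates at $T$ and the left-continuous version $\bar{v}$ is continuous at $T$ by Proposition \ref{presentation}, the second piece tends to $(\varphi_{T},v_{T})$, producing exactly the boundary term. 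For general $\varphi\in\mathcal{W}$, I would approximate by $\varphi_{n}\in\mathcal{W}\cap\mathcal{C}$ in $\mathcal{W}$-norm, which also controls $\varphi_{n,T}\to\varphi_{T}$ in $L^{2}(\cO)$ through the embedding $\mathcal{W}\hookrightarrow C([0,T];L^{2}(\cO))$. The right-hand side passes to the limit by bilinearity, and on the left the distinguished subsequence from the first step gives $\tilde{\varphi}_{n}\to\tilde{\varphi}$ q.e., hence $\nu^{v}$-a.e., with dominated convergence justified by the capacity estimate and the no-charge property.

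The main obstacle is the Chebyshev-type capacity estimate: one has to identify the correct majorant of $|\psi|/\lambda$ in $\mathcal{P}$ and bound the mass of the associated regular measure by $\|\psi\|_{\mathcal{W}}^{2}$, which rests on a fine use of the parabolic obstacle problem at the level of $\mathcal{W}$. Once this estimate and the no-charge property are secured, the construction of $\tilde{\varphi}$, the temporal cutoff manipulation and the final passage to the limit are largely technical.
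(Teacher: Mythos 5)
The paper does not prove this proposition: it is imported verbatim as Theorem~III.1 from Pierre~\cite{PIERRE}, so there is no in-paper argument to compare against. Your sketch reproduces the outline of Pierre's construction (Chebyshev-type capacity bound via a dominating parabolic potential, Borel--Cantelli in capacity to extract a quasi-continuous limit, then a density argument from $\mathcal{W}\cap\mathcal{C}$ combined with the temporal cutoff to recover the boundary term $(\varphi_T,v_T)$), and the broad strategy is sound. However, as written it has two genuine gaps.

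First, you never establish that $\int_{[0,T[\times\cO}|\tilde\varphi|\,d\nu^v<\infty$, so the left-hand side is not even a priori well-defined for general $\varphi\in\mathcal{W}$. The measure $\nu^v$ is a Radon measure but need not be finite on $[0,T[\times\cO$, and in Pierre's framework integrability is not free: one has to produce a parabolic potential $\kappa\in\mathcal{P}$ with $|\tilde\varphi|\le\kappa$ q.e.\ and $\|\kappa\|_{\mathcal{K}}\le C\|\varphi\|_{\mathcal{W}}$ (exactly the Mignot--Puel construction that the paper records as the $\kappa$-operator in Section~3 and in Lemma~\ref{estimoftau}), and then prove $\int\kappa\,d\nu^v<\infty$ by an energy/Cauchy--Schwarz estimate. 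Compare the hypothesis ``$|u|$ is bounded by an element in $\mathcal{P}$'' that the paper explicitly imposes in Lemma~\ref{convergemeas}: that hypothesis is there precisely because integrability is not automatic. Second, your final passage to the limit invokes ``dominated convergence justified by the capacity estimate and the no-charge property,'' but no dominant is supplied, and the no-charge property alone gives $\nu^v$-a.e.\ convergence, not integrability control. The correct mechanism is again the $\kappa$-construction applied to $\varphi_n-\varphi$, which gives a parabolic potential majorant $\kappa_n$ with $\|\kappa_n\|_{\mathcal{K}}\to 0$ and hence $\int|\tilde\varphi_n-\tilde\varphi|\,d\nu^v\le\int\kappa_n\,d\nu^v\to 0$; without this quantitative estimate the argument does not close. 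As a minor point, your proof that $\nu^v$ does not charge polar sets ``by fabricating a test function from $v_K$ after truncation'' is also undersupported: $v_K\in\mathcal{P}$ lies in $\mathcal{K}$, not in $\mathcal{W}$, so it cannot be plugged into Proposition~\ref{presentation} directly, and the bound $\nu^v(K)\le C_v\,\mathrm{cap}(K)$ you assert is stronger than what the natural Cauchy--Schwarz argument yields.
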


We end this section by a convergence lemma which plays an important role in our approach (Lemma 3.8 in \cite{PIERRE}):
\begin{lemma}\label{convergemeas}
If $v^n\in\mathcal{P}$ is a bounded sequence in $\mathcal{K}$ and
converges weakly to $v$ in $L^2([0,T];H_0^1(\mathcal{O}))$; if $u$ is
a quasi-continuous function and $|u|$ is bounded by a element in
$\mathcal{P}$. Then
$$\lim_{n\rightarrow+\infty}\int_0^T\int_\mathcal{O}ud\nu^{v^n}=\int_0^T\int_\mathcal{O}ud\nu^{v}.$$
\end{lemma}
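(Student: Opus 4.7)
The plan is to first verify the convergence on a class of smooth test functions using the defining weak formulation of a regular measure, and then to extend to the general quasi-continuous $u$ by a domination argument based on the fact that regular measures do not charge polar sets.

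\textbf{Step 1 (Test function case).} I would first fix $\varphi\in\mathcal{W}_T\cap\mathcal{C}$ and invoke Proposition \ref{presentation} to write
\[
\int_0^T\!\!\int_{\mathcal{O}}\varphi\,d\nu^{v^n} \;=\; \int_0^T \bigl(-\partial_t\varphi_t,\,v^n_t\bigr)\,dt \;+\; \int_0^T \mathcal{E}(\varphi_t,v^n_t)\,dt.
\]
Since $\partial_t\varphi\in L^2([0,T];H^{-1}(\mathcal{O}))$ and the bilinear form $\mathcal{E}$ associated with $a$ is continuous on $L^2([0,T];H_0^1(\mathcal{O}))$ (by the boundedness assumption on $a$), the right-hand side is a continuous linear functional of $v^n$ for the weak topology of $L^2([0,T];H_0^1(\mathcal{O}))$; the weak convergence $v^n\rightharpoonup v$ therefore yields the conclusion for all such $\varphi$.

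\textbf{Step 2 (Uniform domination).} Let $p\in\mathcal{P}$ be a parabolic potential dominating $|u|$ q.e. Applying Proposition \ref{approximation} with test element $p$ and using the boundedness of $(v^n)$ in $\mathcal{K}$, I would derive
\[
\sup_n \int_0^T\!\!\int_{\mathcal{O}} p\,d\nu^{v^n} \;\leq\; C\,\sup_n\|v^n\|_{\mathcal{K}} \;<\;+\infty,
\]
together with the standard fact that regular measures do not charge polar sets. This furnishes the uniform integrable majorant that the approximation argument of Step 3 needs.

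\textbf{Step 3 (Approximation and conclusion).} For the general quasi-continuous $u$ with $|u|\leq p$, I would build, using the density machinery of \cite{PIERRE}, a sequence $\varphi_k\in\mathcal{W}_T\cap\mathcal{C}$ whose quasi-continuous representatives satisfy $|\varphi_k|\leq p$ q.e. and converge to $u$ quasi-uniformly on $[0,T[\times\mathcal{O}$. Since $\nu^{v^n}$ and $\nu^v$ ignore polar sets, this implies $\int |u-\varphi_k|\,d\nu^{v^n}\to 0$ as $k\to\infty$, uniformly in $n$, by the domination of Step 2. For each fixed $k$, Step 1 applied to $\varphi_k$ gives $\int\varphi_k\,d\nu^{v^n}\to\int\varphi_k\,d\nu^v$, and a standard $3\varepsilon$ argument delivers the claim.

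\textbf{Main obstacle.} The delicate point will be Step 3: producing an approximation $\varphi_k$ that simultaneously converges to $u$ in capacity and preserves the pointwise bound $|\varphi_k|\leq p$. Were the $\nu^{v^n}$ absolutely continuous with respect to Lebesgue measure, this would reduce to a routine density argument in $L^2$; but the measures $\nu^{v^n}$ are in general genuinely singular, so one must work within the parabolic potential-theoretic framework of \cite{PIERRE}, invoking both the quasi-continuous representative theorem for $\mathcal{W}$ (Proposition \ref{approximation}) and the invariance of $\int(\cdot)\,d\nu^{v^n}$ under q.e. modification of the integrand.
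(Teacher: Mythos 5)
First, note that the paper itself offers no proof of this lemma; it is quoted verbatim as Lemma~3.8 of \cite{PIERRE}. So the comparison can only address the internal correctness of your argument, not contrast it with a proof printed here.

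Step~1 is sound. For fixed $\varphi\in\mathcal{W}_T\cap\mathcal{C}$, the map
$w\mapsto\int_0^T(-\partial_t\varphi_t,w_t)\,dt+\int_0^T\mathcal{E}(\varphi_t,w_t)\,dt$
is a bounded linear functional on $L^2([0,T];H_0^1(\mathcal{O}))$ (using the uniform bound on $a$), so the weak convergence $v^n\rightharpoonup v$ does pass to the limit in the representation identity of Proposition~\ref{presentation}.

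Step~2 contains a gap. You invoke Proposition~\ref{approximation} with ``test element'' $p$, but that proposition requires $\varphi\in\mathcal{W}$, whereas a parabolic potential $p\in\mathcal{P}$ lies only in $\mathcal{K}$; its distributional time derivative is $\nu^p-Ap$, a genuine measure, so $\partial_t p\notin L^2([0,T];H^{-1}(\mathcal{O}))$ in general and $p\notin\mathcal{W}$. A bound of the kind $\sup_n\int\tilde p\,d\nu^{v^n}\le C\sup_n\|v^n\|_{\mathcal{K}}$ is indeed available (Pierre proves a mutual-energy estimate of this type by approximating $p$ from below in $\mathcal{W}^+$ and using monotone convergence), but it does not follow from the tools you cite, and in any case it is not the quantity you actually need in Step~3.

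The more serious gap is in Step~3. Granting Step~2, the implication ``domination by $2p$ plus quasi-uniform convergence $\Rightarrow$ $\int|u-\varphi_k|\,d\nu^{v^n}\to0$ uniformly in $n$'' does not hold. Decompose over the exceptional open set $G_\delta$ with ${\rm cap}(G_\delta)<\delta$ outside of which $\varphi_k\to u$ uniformly. The contribution from $G_\delta^c$ is controlled by $\sup_{G_\delta^c}|u-\varphi_k|$ times the total mass $\nu^{v^n}\big([0,T[\times\mathcal{O}\big)$, and you have not shown those total masses are uniformly bounded. More importantly, the contribution from $G_\delta$ is bounded by $\int_{G_\delta}2p\,d\nu^{v^n}$, and to make this small uniformly in $n$ as $\delta\to0$ you need a uniform-absolute-continuity-with-respect-to-capacity statement, namely $\sup_n\int_{G_\delta}p\,d\nu^{v^n}\to0$ as ${\rm cap}(G_\delta)\to0$. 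A uniform bound on $\sup_n\int p\,d\nu^{v^n}$, which is all Step~2 could ever give, does not imply this. That uniform integrability is the real content of Pierre's Lemma~3.8 and is the missing idea in your proposal; your ``main obstacle'' paragraph points at the construction of $\varphi_k$, but even with a perfectly nice $\varphi_k$ in hand the $3\varepsilon$ argument does not close without this uniformity over $n$.
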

\begin{remark}For the more general case one can see \cite{PIERRE} Lemma 3.8. \end{remark}

\section{Quasi-continuity of the solution of SPDE without obstacle}\label{quasi-contSPDE}
We consider the  SPDE without obstacle:
\begin{eqnarray}\label{SPDE}du_t(x)&=&\partial_i  \left(a_{i,j}(t,x)\partial_ju_t(x)+g_i(t,x,u_t(x),\nabla
u_t(x))\right)dt+f(t,x,u_t(x),\nabla
u_t(x))dt\nonumber\\&&+\sum_{j=1}^{+\infty}h_j(t,x,u_t(x),\nabla
u_t(x))dB^j_t, \end{eqnarray}
As a consequence of well-known results (see for example  \cite{DM11},
Theorem 11),  we know that under  assumptions {\bf(H)} and {\bf (I)},
SPDE (\ref{SPDE}) with zero Dirichlet boundary condition, admits a
unique  solution  in $\mathcal{H}_T$, we denote it by
$\mathcal{U}(\xi,f,g,h)$, moreover it satisfies the following estimate:
\begin{equation}
\label{priori:estimate}
E[\left\| u\right\| _{T}^{2}]\leq cE\left[ \left\| \xi\right\|
^{2}+\int_{0}^{T}\left( \left\|  f^0_{t}\right\|^{2}+\left\|
|g^0_{t} |\right\| ^{2}+ \left\| |h^0_{t}|\right\| ^{2}\right) dt\right]
\end{equation}
The main  theorem of this section is
the following:\begin{theorem}\label{mainquasicontinuity} Under
assumptions {\bf(H)} and {\bf (I)}, $u=\mathcal{U}(\xi,f,g,h)$ the
solution of SPDE (\ref{SPDE})  admits a quasi-continuous version
denoted by $\tilde{u}$ i.e.
 $u=\tilde{u}$ $dP\otimes dt\otimes dx -$a.e. and for almost all $w\in\Omega$,
 $(t,x)\rightarrow \tilde{u}_t
(w,x)$ is quasi-continuous.
\end{theorem}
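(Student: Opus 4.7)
My strategy is to realize $u$ as an $\mathcal{H}_T$-limit of approximations that belong pathwise to $\mathcal{W}$. Once each approximation lies in $\mathcal{W}$ almost surely, Proposition \ref{approximation} automatically supplies quasi-continuous versions, and the quasi-continuous version of $u$ is then identified by a Cauchy-in-capacity argument.

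For the construction I would replace each driving Brownian motion $B^j$ by a Steklov (or piecewise-linear) approximation $B^{n,j}$ of mesh $1/n$ and let $u^n$ solve the pathwise PDE obtained by substituting the martingale term $\sum_j \int_0^{\cdot} h_j\, dB^j$ with $\sum_j \int_0^{\cdot} h_j(s,\cdot,u^n_s,\nabla u^n_s)\,\dot B^{n,j}_s\, ds$. Existence and $\mathcal{H}_T$-estimates for $u^n$ follow from essentially the same arguments that yield existence for \eqref{SPDE}, while for a.e.\ $\omega$ the right-hand side of the equation for $u^n$ lies in $L^2([0,T];H^{-1}(\mathcal{O}))$, so $\partial_t u^n \in L^2([0,T];H^{-1}(\mathcal{O}))$ a.s.\ and hence $u^n \in \mathcal{W}$ a.s. Proposition \ref{approximation} then provides a quasi-continuous version $\tilde u^n$ of $u^n$. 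Convergence $u^n \to u$ in $\mathcal{H}_T$ is obtained by applying It\^o's formula to $\|u^n-u\|^2$, using the Lipschitz bounds in (H) and the classical $L^2$-approximation of Brownian motion by its Steklov averages.

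The main obstacle is promoting $\mathcal{H}_T$-convergence to convergence in capacity. For this one needs, for $\varphi\in\mathcal{W}$ with quasi-continuous representative $\tilde\varphi$, the capacitary bound
\[ \mathrm{cap}\bigl(\{(t,x)\in [0,T[\times\mathcal{O}:\ |\tilde\varphi(t,x)|>\lambda\}\bigr) \;\leq\; C\lambda^{-2}\|\varphi\|_{\mathcal{W}}^2, \]
which is a consequence of the framework recalled in Section \ref{capacity}. Applied to $\varphi=u^{n_{k+1}}-u^{n_k}$, this requires controlling not only $\|u^n-u^m\|_{\mathcal{H}_T}$ but also the $L^2([0,T];H^{-1}(\mathcal{O}))$-norm of $\partial_t(u^n-u^m)$; I would obtain such a bound directly from the PDE satisfied by the difference, combining the Lipschitz hypothesis (H) with the BDG inequality to handle the noise-approximation error in $H^{-1}$. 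After extracting a subsequence $(n_k)$ with $\|u^{n_{k+1}}-u^{n_k}\|_{\mathcal{W}}\leq 2^{-k}$ on a set of full probability, a Borel--Cantelli argument in capacity shows that $(\tilde u^{n_k})$ is quasi-uniformly Cauchy on $[0,T[\times\mathcal{O}$ outside a polar set; its limit $\tilde u$ is the sought quasi-continuous version of $u$, and its predictability in $\omega$ is inherited from the measurable construction of $u^n$ and $\tilde u^n$.
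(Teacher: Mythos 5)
Your overall template---approximate, get quasi-continuous versions of the approximants, and pass to the limit by a Borel--Cantelli argument in capacity---matches the paper's, but the specific approximation you choose defeats the argument. The central problem is that the Wong--Zakai/Steklov approximation $u^n$ does not have bounded $\mathcal{W}$-norm, let alone form a Cauchy sequence in $\mathcal{W}$. Indeed, in the (already reduced) linear case the equation for $u^n$ reads
$\partial_t u^n = Lu^n + f + \operatorname{div} g + \sum_j h_j\,\dot B^{n,j}$,
and for a Steklov approximation of mesh $1/n$ one has $E|\dot B^{n,j}_s|^2 \asymp n$. Hence
$E\int_0^T \big\| \sum_j h_{j,s}\dot B^{n,j}_s \big\|_{H^{-1}}^2\,ds \asymp n\, E\int_0^T\|\,|h_s|\,\|^2\,ds \to \infty$,
so $E\|\partial_t u^n\|^2_{L^2([0,T];H^{-1})}\to\infty$. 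The difference $u^{n+1}-u^n$ inherits this blow-up (there is no cancellation between $\dot B^{n+1,j}$ and $\dot B^{n,j}$), so the subsequence extraction with $\|u^{n_{k+1}}-u^{n_k}\|_{\mathcal{W}}\leq 2^{-k}$ is impossible, and the capacitary bound $\mathrm{cap}(\{|\tilde\varphi|>\lambda\})\lesssim \lambda^{-2}\|\varphi\|_{\mathcal{W}}^2$---even if granted---cannot be applied. This is not a technicality: the stochastic integral is nowhere differentiable in time, so \emph{any} approximation that lands pathwise in $\mathcal{W}$ must have its $\mathcal{W}$-norm explode.

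The paper sidesteps this entirely. It does not try to put the approximants into $\mathcal{W}$; instead, after reducing to the linear case, it mollifies the coefficients $a_{i,j}$, the domain, and the data $\xi, f, g, h$, and represents $u^n$ through the corresponding Green function. Regularity and Gaussian bounds for the fundamental solution (Aronson) make each $u^n$ $P$-a.s.\ jointly continuous in $(t,x)$---a much stronger property than membership in $\mathcal{W}$. The capacity of the exceptional sets $\{|\hat u^{n+1}-\hat u^n|>\varepsilon_n\}$ is then controlled \emph{not} via the $\mathcal{W}$-norm but via the smallest parabolic potential $\kappa$ dominating the difference (Lemma~\ref{cap}), and Lemma~\ref{estimoftau} bounds $\|\kappa\|_{\mathcal{K}}$ purely in terms of the $L^2$-data of the linear SPDE, with no time-derivative anywhere. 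That is precisely the mechanism that lets the argument avoid the $\mathcal{W}$-blow-up you run into. If you want to salvage your plan, you would need to replace the $\mathcal{W}$-capacitary bound by the $\kappa$-route (Lemmas~\ref{cap} and~\ref{estimoftau}), at which point the Wong--Zakai construction offers no advantage over a direct mollification of the data.
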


Before giving the proof of this theorem,
we need the following lemmas. The first one is proved in \cite{PIERRE}, Lemma 3.3:
\begin{lemma}\label{cap}There exists $C>0$ such that, for all open set $\vartheta\subset [0,T[\times \mathcal{O}$ and  $v\in\mathcal{P}$ with $v\geq1\ a.e.$ on $\vartheta$:$$cap\vartheta\leq C\parallel v\parallel^2_{\mathcal{K}}.$$\end{lemma}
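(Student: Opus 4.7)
The plan is to follow the template of Lemma~3.3 in \cite{PIERRE}: reduce to compact subsets of $\vartheta$, then evaluate the capacity as $\operatorname{cap}K=\int d\nu^{v_K}$ via the representation formula of Proposition~\ref{approximation} applied with the capacitary potential $v_K$ itself as test function. By the inner regularity built into the definition of $\operatorname{cap}$ on open sets, $\operatorname{cap}\vartheta=\sup\{\operatorname{cap}K:K\subset\vartheta,\;K\text{ compact}\}$, so it suffices to produce a universal constant $C$ with $\operatorname{cap}K\leq C\|v\|_{\mathcal{K}}^{2}$ for every compact $K\subset\vartheta$. Fix such a $K$. Since $v\geq 1$ a.e.\ on the open neighborhood $\vartheta\supset K$, the constant sequence $v_{n}\equiv v$ exhibits $v$ as a member of $\mathscr{S}_{K}$; the smallest $\nu$-superior potential $v_{K}\in\mathcal{P}$ therefore exists, is bounded above by $v$ in the a.e.\ sense, and satisfies $\operatorname{cap}K=\int d\nu^{v_{K}}$.

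The heart of the proof is to pair $\nu^{v_{K}}$ against $\tilde v_{K}$ itself. The defining sequence for $v_{K}$ can be arranged so that the quasi-continuous representatives converge q.e., which forces $\tilde v_{K}\geq 1$ q.e.\ on $K$; combined with the fact that $\nu^{v_{K}}$ is supported on $K$, this gives
$$\operatorname{cap}K=\int d\nu^{v_{K}}\leq\int\tilde v_{K}\,d\nu^{v_{K}}.$$
Formally applying Proposition~\ref{approximation} with $\varphi=v_{K}$ and integrating by parts in time via $\int_{0}^{T}(-\partial_{t}v_{K},v_{K})\,dt=\tfrac{1}{2}(\|v_{K}(0)\|^{2}-\|v_{K}(T)\|^{2})$ yields
$$\int\tilde v_{K}\,d\nu^{v_{K}}=\tfrac{1}{2}\|v_{K}(0)\|^{2}+\tfrac{1}{2}\|v_{K}(T)\|^{2}+\int_{0}^{T}\mathcal{E}(v_{K},v_{K})\,dt,$$
which is at most $C_{1}\|v_{K}\|_{\mathcal{K}}^{2}$ by the ellipticity and boundedness bounds in \eqref{operator}. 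The pointwise comparison $v_{K}\leq v$ together with the energy estimates implicit in the variational problem defining $v_{K}$ then gives $\|v_{K}\|_{\mathcal{K}}\leq C_{2}\|v\|_{\mathcal{K}}$, so that $\operatorname{cap}K\leq C_{1}C_{2}^{2}\|v\|_{\mathcal{K}}^{2}$, and taking the supremum over compact $K\subset\vartheta$ finishes the argument.

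The main obstacle is making this computation rigorous, since $v_{K}\in\mathcal{P}\subset\mathcal{K}$ is not a priori in $\mathcal{W}$ and so cannot be used as a test function in Proposition~\ref{approximation} directly. I would address this by a time mollification: choose $\varphi_{n}\in\mathcal{W}$ with $\varphi_{n}\to v_{K}$ in $L^{2}([0,T];H_{0}^{1}(\cO))$, $\tilde\varphi_{n}\to\tilde v_{K}$ quasi-everywhere, and $\varphi_{n}(T)\to v_{K}(T)$ in $L^{2}(\cO)$, obtained from the standard regularization of parabolic potentials in \cite{PIERRE}. Applying Proposition~\ref{approximation} to each $\varphi_{n}$, passing to the limit on the measure side via Lemma~\ref{convergemeas} (which applies because $\tilde v_{K}$ is a quasi-continuous function dominated by an element of $\mathcal{P}$), and using the continuity of the bilinear form $\mathcal{E}$ on the test side recovers the displayed identity. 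This mollification step, together with the precise-representative theory of \cite{PIERRE}, constitutes the genuine technical content of the lemma.
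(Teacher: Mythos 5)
You are reconstructing a proof the paper does not actually give: Lemma~\ref{cap} is quoted wholesale from \cite{PIERRE} (Lemma~3.3), whose variational framework already covers time-dependent coefficients, so your argument has to stand on its own. Its architecture (reduce to compact $K\subset\vartheta$, note $v\in\mathscr{S}_K$ hence $v_K\le v$, evaluate $\int d\nu^{v_K}$ by an energy pairing) is in the right spirit, but the two computations you carry out by hand are exactly the ones that fail, and both failures come from the same source: capacitary potentials are genuinely discontinuous in time. Elements of $\cP$ are only guaranteed right- and left-continuous $L^2(\cO)$-versions (Proposition~\ref{presentation}); a quasi-continuous version in the sense of this paper is only provided for elements of $\mathcal{W}$ (Proposition~\ref{approximation}), and $v_K$ typically jumps across a time-slice of positive capacity, so the object $\tilde v_K$ and the claim ``$\tilde v_K\ge 1$ q.e.\ on $K$'' are not supplied by anything quoted here and are precisely the delicate point. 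Worse, the central identity is false as stated: your time-mollification makes the left-hand side converge to the pairing of $\nu^{v_K}$ with the half-sum $\tfrac12(\hat v_K+\bar v_K)$, not with the right-continuous representative; equivalently, the claimed right-hand side misses the nonnegative jump term $\tfrac12\sum_t\|\hat v_{K,t}-\bar v_{K,t}\|^2$. Test it on $K=\{t_0\}\times K'$: there $v_K$ vanishes for $t<t_0$ and solves the homogeneous equation with datum $\mathbf{1}_{K'}$ after $t_0$, so $\nu^{v_K}=\delta_{t_0}\otimes\mathbf{1}_{K'}\,dx$ and $\operatorname{cap}K=\lambda_d(K')$ (Proposition~\ref{Versiont}); the pairing with the right-continuous version equals $\lambda_d(K')$, the pairing with the left-continuous version equals $0$, while your energy expression equals $\tfrac12\lambda_d(K')$. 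Whichever representative you choose, one link of the chain $\operatorname{cap}K\le\int\tilde v_K\,d\nu^{v_K}=\tfrac12\|v_K(0)\|^2+\tfrac12\|v_K(T)\|^2+\int_0^T\mathcal{E}(v_K,v_K)\,dt$ breaks. A corrected version can be salvaged, but only after controlling the sum of squared jumps by $\|v_K\|^2_{\mathcal{K}}$, which is an additional argument, not a formality absorbed by mollification.

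The other load-bearing step, $\|v_K\|_{\mathcal{K}}\le C_2\|v\|_{\mathcal{K}}$, is only asserted (``energy estimates implicit in the variational problem defining $v_K$''). The a.e.\ domination $0\le v_K\le v$ controls $\sup_t\|v_{K,t}\|$, i.e.\ only the $L^\infty([0,T];L^2(\cO))$ half of the $\mathcal{K}$-norm; it gives no bound on $\int_0^T\|\nabla v_{K,t}\|^2\,dt$, and none of the quoted propositions supplies one. Establishing such an energy estimate for the r\'eduite (say, by testing the penalized scheme constructing $v_K$ against $v_K-v$) is essentially of the same order of difficulty as the lemma itself, so as written the proof is circular at its quantitative core. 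Leaning on Pierre's precise-representative theory is legitimate --- the paper itself simply cites \cite{PIERRE} --- but then the two steps you chose to do explicitly are exactly where the time-jumps of parabolic potentials and the missing energy bound for $v_K$ make the argument break down.
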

Let $\kappa=\kappa(u,u^+(0))$ be defined as following
$$\kappa=ess\inf\{v\in\cP;\ v\geq u\ a.e.\ and\ v(0)\geq u^+(0)\}.$$
One has to note that $\kappa$ is a random function. From now on, we always take for $\kappa$ the following measurable version
$$\kappa =\sup_n v^n,$$
where  $(v^n)$ is the non-decreasing sequence of random functions given by
\begin{equation} \label{eq:1}
\left\{ \begin{split}
         &\frac{\partial v_t^n}{\partial t}=Lv_t^n+n(v_t^n-u_t)^-\\
                  & v^n_0=u^+(0).
                          \end{split} \right.
                          \end{equation}
From F.Mignot and J.P.Puel \cite{MignotPuel}, we know that for almost all $w\in\Omega$, $v^n (w)$ converges weakly to $v(w)=\kappa(u(w),u^+(0)(w))$ in
$L^2([0,T];H_0^1(\mathcal{O}))$ and that $v\geq u$.
\begin{lemma}\label{estimoftau} We have the following estimate:
\begin{eqnarray*}E\parallel\kappa\parallel_{\mathcal{K}}^2 \, \leq C\left(E\parallel u_0^+\parallel^2+E\parallel u_0\parallel^2+E\int_0^T\parallel f_t^0\parallel^2+\parallel|g_t^0|\parallel^2+\parallel|h_t^0|\parallel^2dt\right),\end{eqnarray*}
where $C$ is a constant depending only on the structure constants of the equation.\\
\end{lemma}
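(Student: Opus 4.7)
The plan is to work with the penalized sequence $v^n$ defined in \eqref{eq:1}. Since $v_0^n=u_0^+\geq 0$ and the penalty $n(v^n-u)^-$ is non-negative, a pathwise maximum principle gives $v^n\geq 0$; the sequence is moreover non-decreasing (Mignot--Puel), so $v^n\uparrow \kappa$ pointwise. The strategy is to derive a uniform-in-$n$ estimate on $\|v^n\|_{\mathcal{K}}^2$, from which the bound on $\kappa$ follows by monotone convergence for the $L^\infty([0,T];L^2(\mathcal{O}))$ component and by weak lower semicontinuity for the $L^2([0,T];H_0^1(\mathcal{O}))$ component.

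The central idea is to apply It\^o's formula \emph{not} to $\|v_t^n\|^2$ but to $\|v_t^n-u_t\|^2$. Subtracting \eqref{SPDE} from \eqref{eq:1} gives
\[
d(v_t^n-u_t)=\bigl[L(v_t^n-u_t)+n(v_t^n-u_t)^- -\partial_i g_i(t,\cdot,u_t,\nabla u_t)-f(t,\cdot,u_t,\nabla u_t)\bigr]dt-\sum_{j\geq 1} h_j(t,\cdot,u_t,\nabla u_t)\,dB_t^j,
\]
with initial datum $v_0^n-u_0=\xi^+-\xi=\xi^-$. Testing this against $v_t^n-u_t$ converts the penalty contribution into $n\bigl\langle v^n-u,(v^n-u)^-\bigr\rangle = -n\|(v^n-u)^-\|^2$, which is \emph{dissipative}. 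This is exactly the trick that removes the otherwise fatal $n$-dependence: testing against $v^n$ itself would produce an indefinite term $n\bigl\langle (v^n-u)^-,v^n\bigr\rangle$ with prefactor $n$ that cannot be closed into the data.

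The SPDE It\^o formula for $\|v_t^n-u_t\|^2$ then yields, after invoking ellipticity $\mathcal{E}(w,w)\geq \lambda\|\nabla w\|^2$, the Lipschitz assumptions \textbf{(H)} on $f,g,h$, Young's inequality (the contraction $2\alpha+\beta^2<2\lambda$ from \textbf{(H)}-(iv) is precisely what allows the $\|\nabla(v^n-u)\|^2$ terms coming from $g$ and $h$ to be absorbed into the dissipation), Gronwall's lemma, and Burkholder--Davis--Gundy applied to the martingale $\int_0^{\cdot}\langle v^n-u,h_j\rangle\,dB_s^j$, an estimate of the form
\[
E\|v^n-u\|_{\mathcal{K}}^2\leq C\,E\Bigl[\|\xi\|^2+\|u_0^+\|^2+\int_0^T \bigl(\|f_s^0\|^2+\||g_s^0|\|^2+\||h_s^0|\|^2\bigr)ds+\|u\|_T^2\Bigr]
\]
with constants $C$ independent of $n$. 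The a priori estimate \eqref{priori:estimate} eliminates $\|u\|_T^2$ in favour of the data, and the triangle inequality $\|v^n\|_{\mathcal{K}}^2\leq 2\|v^n-u\|_{\mathcal{K}}^2+2\|u\|_T^2$ gives the required uniform bound on $v^n$. Passing to the limit concludes the proof. The only real obstacle is the identification of the correct test function above; once $v^n-u$ is used in place of $v^n$, the rest is a careful but routine execution of the It\^o/Young/Gronwall/BDG template for quasilinear SPDEs.
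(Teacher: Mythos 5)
Your argument is correct and follows the same route the paper (via its deferral to Lemma~3 of \cite{DMZ12}) takes: a uniform-in-$n$ energy estimate on the penalized approximants $v^n$ obtained by applying It\^o's formula to $\|v^n_t-u_t\|^2$, so that the penalty contributes the dissipative term $-2n\|(v^n-u)^-\|^2$, followed by ellipticity, Young, Gronwall and Burkholder--Davis--Gundy, and a passage to the limit using the monotonicity of $(v^n)$. One small inaccuracy in the explanation is worth noting: since in the equation for $v^n-u$ the coefficients $g$ and $h$ are evaluated at $(u_t,\nabla u_t)$ and not at $(v^n_t-u_t,\nabla(v^n_t-u_t))$, the $\|\nabla(v^n-u)\|^2$ contribution comes only from the cross term $(g(u,\nabla u),\nabla(v^n-u))$ and is absorbed by Young with an arbitrarily small parameter against the ellipticity term, so the contraction condition $2\alpha+\beta^2<2\lambda$ is not what closes that step---it is needed only through the a priori estimate \eqref{priori:estimate} for $u$, which you invoke to eliminate $\|u\|_T^2$.
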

Thanks to (\ref{operator}), the proof of Lemma 3 in \cite{DMZ12} can be easily extended to the case of non-homogeneous operator. \\

{\bf Proof of Theorem \ref{mainquasicontinuity}:}
First of all, we remark that we only need to prove this result in the linear case, namely we consider that $f$, $g$ and $h$ only depend on $t$, $x$ and $\omega$. Then, we approximate the coefficients, the domain and the second order operator in the following way:
\begin{enumerate}
\item We mollify coefficients $a_{i,j}$ and so consider
sequences $(a^n_{i,j})_n$ of $C^{\infty}$ functions such that for
all $n\in\Ne$, the matrix $a^n$ satisfies the same ellipticity and
boundedness assumptions as $a$ and
\[ \forall 1\leq i,j\leq d ,\ \lim_{n\rightarrow +\infty}
a^n_{i,j}= a_{i,j}\   a.e.\]
\item We  approximate $\cO$ by  an increasing sequence of smooth domains $(\cO^n )_{n\geq 1}$.
\item We consider  a sequence $(\xi^n )$ in $C_c^{\infty}
({\cO})$ which converges to $\xi$ in $L^2 (\cO )$ and such that for all $n$, ${\rm supp\,} \xi^n \subset \cO^n$.
\item For each $i\in\Ne$, we construct a sequence of predictable functions $(h^n_i
)$  in \\$\left(L^2 (\Omega )\otimes C_c ([0,+\infty
))\otimes C_c^{\infty}({\cO})\right)$ which converges in
$L^2_{loc} (\R_+ ;L^2 (\Omega \times \cO))$ to $h_i$ such that for all $n$, ${\rm supp\,} w^n_i \subset \cO^n$ and
$$\forall t\geq 0 ,\ E[ \int_0^t \| h^n_{i,s}\|^2 \, ds] \leq E[ \int_0^t \| h_{i,s}\|^2 \, ds],$$
so that
$$E[\int_0^t \| |h^n_s|\|^2 ds ]\leq E[\int_0^t \| |h_s|\|^2 ds ]<+\infty.$$
\item We consider a sequence  of predictable functions $(f^{n})$  in $\left(L^2 (\Omega )\otimes C_c ([0,+\infty
))\otimes C_c^{\infty}({\cO})\right)$ which converges in
$L^2_{loc} (\R_+ ;L^2 (\Omega \times \cO))$ to  $f$
and such that for all $n$, ${\rm supp \,} f^{n} \subset \cO^n$.
\item Finally,  let $(g^{n})$ be a sequence in $\left(L^2
(\Omega )\otimes C_c ([0,+\infty ))\otimes
C_c^{\infty}({\cO})\right)^d  $ which converges in
$L^2_{loc} (\R_+ ;L^2 (\Omega \times \cO)^d)$ to $g$and such that for all $n$, ${\rm supp\,} g^{n} \subset \cO^n$ .
\end{enumerate}
For all
$n\in\Ne$, we put $\Delta^n  =\{(t,x,s,y)\in\R_+ \times \cO^n\times \R_+ \times \cO^n; t> s\}.$ We denote  by $G^n :\Delta^n \mapsto \R_+$ the
  weak fundamental solution of the problem \eqref{weak:fundamental} associated to $a^n$ and
  $\cO^n$:
\begin{equation}
\partial_t G^n (t,x;s,y)  -
\sum_{i,j=1}^d \partial_i a^n_{i,j}(t,x)\partial_j G^n (t,x;s,y) =0
\end{equation}
with  Dirichlet boundary condition $ G^n(t,x,s,y) = 0 , \quad
\mbox{for all} \; (t,x) \in \; (s,\, +\infty ) \times \partial
\cO^n
\,$.\\
In a natural way we extend $G^n$ on $\Delta$ by setting:
$G^n\equiv0$ on $\Delta \setminus \Delta^n$.\\
We define the process $u^n$ by setting for all
$(t,x)\in \R_+ \times \cO$:

\begin{equation}\begin{split}
u_t ^n(\cdot )=&\int_{\cO}G(t,\cdot,0,y)\xi^n(y)\, dy
+\int_0^t\int_{\cO} G(t,\cdot,s,y)f^n_s (y)dy
ds \\
&\ \ +\sum_{i=1}^{d}\int_0^t\int_{\cO}
G(t,\cdot,s,y)\partial_{i,y}g^n_i (y)dy ds
\\&\ \  +\sum_{j=1}^{+\infty} \int_0^t\int_{\cO} G(t,\cdot,s,y)h^n_{j,s} (y) dB^j_s\, .\\
\end{split}
\end{equation}
The main point is that there exists a subsequence of $(G^n )_{n\geq 1}$ which converges everywhere to $G$  on $\Delta $, where $G$ still denotes the fundamental solution of  \eqref{weak:fundamental}, see Lemma 7 in \cite{DM11}.
From Proposition 6 in \cite{DM11}, we know that $u^n\in\cH_T$ is the unique weak solution of (\ref{SPDE}).
$G$ is uniformly continuous in space-time variables on any compact away from the diagonal in time ( see Theorem 6
in \cite{Aronson}) and satisfies Gaussian estimates (see Aronson \cite{Aronson3}), this ensures that for all $n\in\bbN^*$, $u^n$ is $P$-almost surely continuous in $(t,x)$.\\
Moreover, since sequences $(f^n)_n$, $(g^n )_n$ and $(h_n )_n$ are uniformly bounded in $L^2$-spaces, as a consequence of estimate \eqref{priori:estimate}, $(u^n)_n$ is bounded in $\mathcal{H}_T$ hence in $L^2 ([0,T]\times \Omega ;H^1_0 (\cO))$, so that we can extract a subsequence $(u^{n_k})_k$ which converges weakly
in $L^2 ([0,T]\times \Omega;H^1_0 (\cO))$ and such that a sequence of convex combinations $(\hat{u}^n)$ of the form $$\hat{u}^n=\sum_{k=1}^{N_n}\alpha_k^nu^{n_k}$$
converges strongly to $u$ in $L^2(\Omega\times[0,T];H_0^1(\mathcal{O}))$.
It is clear that for all $n\in\bbN^*$, $(\hat{u}^n)_n$ is $P-$almost surely continuous in $(t,x)$.

We consider a sequence of random open sets
$$\vartheta_n=\{|\hat{u}^{n+1}-\hat{u}^n|>\epsilon_n\},\quad\Theta_p=\bigcup_{n=p}^{+\infty}\vartheta_n.$$
Let
$\kappa_n=\kappa(\frac{1}{\epsilon_n }(\hat{u}^{n+1}-\hat{u}^n),\frac{1}{\epsilon_n }(\hat{u}^{n+1}-\hat{u}^n)^+(0))+\kappa(-\frac{1}{\epsilon_n }(\hat{u}^{n+1}-\hat{u}^n),\frac{1}{\epsilon_n }(\hat{u}^{n+1}-\hat{u}^n)^-(0))$,
from the definition of $\kappa$ and the relation (see \cite{PIERRE})
$$\kappa(|v|)\leq\kappa(v,v^+(0))+\kappa(-v,v^-(0)),$$ we know that
 $\kappa_n$ satisfy the conditions of Lemma \ref{cap}, i.e. $\kappa_n\in\mathcal{P}$ et $\kappa_n\geq1\ a.e.$ on $\vartheta_n$,
thus we get the following relation
$$cap\, (\Theta_p )\leq\sum_{n=p}^{+\infty}cap\, (\vartheta_n )\leq\sum_{n=p}^{+\infty}\parallel\kappa_n\parallel^2_{\mathcal{K}}.$$

Thus, remarking that $\hat{u}^{n+1}-\hat{u}^n =\mathcal{U}(\xi^{n+1}-\xi^n ,f^{n+1}-f^n ,g^{n+1}-g^n, h^{n+1}-h^n )$, we apply Lemma \ref{estimoftau} to
$\kappa(\frac{1}{\epsilon_n }(\hat{u}^{n+1}-\hat{u}^n),\frac{1}{\epsilon_n }(\hat{u}^{n+1}-\hat{u}^n)^+(0))$ and
$\kappa(-\frac{1}{\epsilon_n }(\hat{u}^{n+1}-\hat{u}^n),\frac{1}{\epsilon_n }(\hat{u}^{n+1}-\hat{u}^n)^-(0))$ and obtain:
\begin{eqnarray*}E[cap\; ( \Theta_p)]\leq\sum_{n=p}^{+\infty}
E\parallel\kappa_n\parallel^2_{\mathcal{K}}&\leq&
2C\sum_{n=p}^{+\infty}\frac{1}{\epsilon_n^2}(E\parallel
\xi^{n+1}-\xi^n\parallel^2+E\int_0^T\parallel
f_t^{n+1}-f_t^n\parallel^2\\&+&\parallel
|g_t^{n+1}-g_t^n|\parallel^2+\parallel
|h_t^{n+1}-h_t^n|\parallel^2dt).\end{eqnarray*}

Then, by extracting a subsequence, we can consider that
\begin{eqnarray*}E\parallel \xi^{n+1}-\xi^n\parallel^2+E\int_0^T\parallel f_t^{n+1}-f_t^n\parallel^2+\parallel |g_t^{n+1}-g_t^n|\parallel^2+\parallel |h_t^{n+1}-h_t^n|\parallel^2dt\leq\frac{1}{2^n}.\end{eqnarray*}
Then we take $\epsilon_n =\frac{1}{n^2}$ to get
\begin{eqnarray*}E[cap\; (\Theta_p)]\leq\sum_{n=p}^{+\infty}\frac{2Cn^4}{2^n}.\end{eqnarray*}
Therefore\begin{eqnarray*}\lim_{p\rightarrow+\infty}E[cap\; (\Theta_p)]=0.\end{eqnarray*}
For almost all $\omega\in\Omega$, $\hat{u}^n(\omega)$ is continuous in
$(t,x)$ on $(\Theta_p (w))^c$ and $(\hat{u}^n(\omega))_n $ converges uniformly to
$u$ on $(\Theta_p (w))^c$ for all $p$, hence, $u(\omega)$ is continuous
in $(t,x)$ on $(\Theta_p (w))^c$, then from the definition of
quasi-continuous, we know that $u(\omega)$ admits a quasi-continuous
version since $cap \; (\Theta_p )$ tends to $0$ almost surely as $p$ tends to $+\infty$. \hfill $\Box$

\section{Existence and uniqueness result}
From now on, similarly to the homogeneous case studied in \cite{DMZ12}, we make the following assumptions on the obstacle:\\
\textbf{Assumption (O):} The obstacle $S$ is assumed to be an adapted process, quasi-continuous, such that $S_0 \leq \xi$ $P$-almost surely and
controlled by the solution of a SPDE, i.e. $\forall t\in[0,T]$,
\begin{equation}S_t\leq S'_t\end{equation} where
$S'$ is the solution of a linear SPDE
\begin{equation}\left\{\begin{array}{ccl} \label{obstacle}
 dS'_t(x)&=&\partial_i(a_{i,j}(t,x)\partial_jS'_t(x)+g'_{i,t}(x))dt+f'_t(x)dt++\sum_{j=1}^{+\infty}h'_{j,t}(x)dB^j_t\\
                  S'(0)&=&S'_0
\end{array}\right. \end{equation}
with $S'_0\in L^2 (\Omega\times \cO)$
$\mathcal{F}_0$-measurable, $f'$, $g'$ and $h'$  adapted
processes respectively in $L^2
([0,T]\times\Omega\times\cO;\mathbb{R})$,  $L^2
([0,T]\times\Omega\times\cO;\mathbb{R}^d)$ and $L^2
([0,T]\times\Omega\times\cO;\mathbb{R}^{\mathbb{N}^*})$.
\begin{remark}Here again, we know that $S'$ uniquely exists and satisfies the following estimate:
\begin{equation}\label{estimobstacle}E\sup_{t\in[0,T]}\parallel S'_t\parallel^2+E\int_0^T\parallel \nabla S'_t\parallel^2dt\leq
CE\left[\parallel S'_0\parallel^2+\int_0^T(\parallel f'_t\parallel^2+\parallel |g'_t|\parallel^2+\parallel |h'_t|\parallel^2)dt\right].\end{equation}
Moreover, from Theorem \ref{mainquasicontinuity}, $S'$ admits a
quasi-continuous version. \\
Let us also remark that even if this assumption seems restrictive since $S'$ is driven by the same operator and Brownian motions as $u$, it encompasses
a large class of examples. \end{remark}
We now are able to define rigorously the notion of solution to the problem with obstacle:
\begin{definition} A pair
$(u,\nu)$ is said to be a solution of the obstacle problem for
(\ref{SPDE}) if
\begin{enumerate}
    \item $u\in\mathcal{H}_T$ and $u(t,x)\geq S(t,x),\ dP\otimes dt\otimes
    dx-a.e.$ and $u_0(x)=\xi,\ dP\otimes dx-a.e.$;
    \item $\nu$ is a random regular measure defined on
    $[0,T)\times\mathcal{O}$;
    \item the following relation holds almost surely, for all
    $t\in[0,T]$ and $\forall\varphi\in\mathcal{D}$,
     \begin{eqnarray}\label{solution}&&(u_t,\varphi_t)-(\xi,\varphi_0)-\int_0^t(u_s,\partial_s\varphi_s)ds-\sum_{i,j=1}^d\int_0^t\int_{\cO}a_{i,j}(s,x)\partial_i u_{s}(x)\partial_j\varphi _{s}(x)dx\, ds
     \nonumber\\&&=\int_0^t(f_s(u_s,\nabla u_s),\varphi_s)ds+\sum_{i=1}^d\int_0^t(g^i_s(u_s,\nabla u_s),\partial_i\varphi_s)ds\nonumber\\&&
    \quad+\sum_{j=1}^{+\infty}\int_0^t(h^j_s(u_s,\nabla u_s),\varphi_s)dB^j_s+\int_0^t\int_{\mathcal{O}}\varphi_s(x)\nu(dx,ds).\end{eqnarray}
    \item $u$ admits a quasi-continuous version, $\tilde{u}$, and we have  $$\int_0^T\int_{\mathcal{O}}(\tilde{u}(s,x)-{S}(s,x))\nu(dx,ds)=0\ \
    a.s.$$
  \end{enumerate}
\end{definition}
The first important result of this paper is:
\begin{theorem}{\label{maintheo}}
Under assumptions {\bf (H)}, {\bf (I)} and {\bf (O)}, there exists a
unique weak solution of the obstacle problem for the SPDE
(\ref{SPDE}) associated to $(\xi,\ f,\ g,\ h,\ S)$.\\
We denote by $\mathcal{R}(\xi,f,g,h,S)$ the solution of SPDE
(\ref{SPDE}) with obstacle when it exists and is unique.
\end{theorem}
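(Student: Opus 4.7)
The plan is to adapt the penalization scheme from the homogeneous case \cite{DMZ12} to the time-dependent operator, relying throughout on the quasi-continuity result of Theorem \ref{mainquasicontinuity} (which now uses the fundamental solution $G$ of \eqref{weak:fundamental} in place of a semigroup). For existence, I would introduce, for each $n\in\N^{*}$, the penalized SPDE
\[
 du^{n}_{t}(x)=\partial_{i}\!\left(a_{i,j}(t,x)\partial_{j}u^{n}_{t}(x)\right)dt+\partial_{i}g_{i}(t,x,u^{n}_{t},\nabla u^{n}_{t})\,dt+f^{n}(t,x,u^{n}_{t},\nabla u^{n}_{t})\,dt+\sum_{j}h_{j}(t,x,u^{n}_{t},\nabla u^{n}_{t})\,dB^{j}_{t},
\]
with $f^{n}=f+n(u^{n}-S)^{-}$. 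Since $y\mapsto n(y-S_{t}(x))^{-}$ is Lipschitz, the modified drift still satisfies \textbf{(H)}, so $u^{n}=\mathcal{U}(\xi,f^{n},g,h)$ exists, lies in $\mathcal{H}_{T}$, and admits a quasi-continuous version $\tilde{u}^{n}$ by Theorem \ref{mainquasicontinuity}. Writing $\nu^{n}(dt,dx):=n(u^{n}-S)^{-}(t,x)\,dt\,dx$, each $u^{n}$ weakly solves the obstacle relation \eqref{solution} with $\nu$ replaced by $\nu^{n}$.

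The next step is uniform estimates. Comparing $u^{n}$ to $S'$ via Itô's formula applied to $(u^{n}-S')^{-}$ (here one uses the hypothesis $S\le S'$ so $(S-S')^{-}=0$ and the barrier is effectively controlled by a solution of a genuine SPDE), one obtains a bound of the form $\sup_{n}\bigl(E\|u^{n}\|_{T}^{2}+E\int_{0}^{T}\!\!\int_{\cO}d\nu^{n}\bigr)<+\infty$, together with $\sup_{n}E\|\kappa(u^{n}-S')^{-}\|_{\mathcal{K}}^{2}<+\infty$ via Lemma \ref{estimoftau}; the contraction condition \textbf{(H)}-(iv) is what makes these estimates close. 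Up to extracting a subsequence, $u^{n}\rightharpoonup u$ in $L^{2}(\Omega\times[0,T];H_{0}^{1}(\cO))$, a convex combination $\hat u^{n}\to u$ strongly, and $\nu^{n}\rightharpoonup\nu$ as measures on $[0,T)\times\cO$. The crucial point is to show $\nu$ is a \emph{regular} measure in the sense of Proposition \ref{presentation}: this is done by representing the penalized solutions through parabolic potentials and passing to the limit, exactly as in \cite{DMZ12}, using the uniform $\mathcal{K}$-bound above so that $\nu$ arises from an element of $\mathcal{P}$.

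Passing to the limit in \eqref{solution} for $u^{n}$ is straightforward for the linear terms; the nonlinear terms $f$, $g$, $h$ are handled by combining the weak $L^{2}(H^{1}_{0})$ convergence with the strong convergence of the convex combinations, as in the proof of Theorem \ref{mainquasicontinuity}. To establish the Skohorod minimality $\int_{0}^{T}\!\!\int_{\cO}(\tilde{u}-S)\,d\nu=0$, I would use that $\tilde u-S$ is nonnegative and quasi-continuous, and exploit the identity $\int_{0}^{T}\!\!\int_{\cO}(u^{n}-S)\,d\nu^{n}=-\int_{0}^{T}\!\!\int_{\cO}n\bigl((u^{n}-S)^{-}\bigr)^{2}\le 0$ together with Lemma \ref{convergemeas} applied with $|u|\le \tilde u-S$ dominated by a suitable parabolic potential. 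The passage $\liminf\int(u^{n}-S)\,d\nu^{n}\ge\int(\tilde u-S)\,d\nu$, combined with $\int(\tilde u-S)\,d\nu\ge 0$ and the nonpositivity above, yields the Skohorod condition.

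For uniqueness, if $(u^{1},\nu^{1})$ and $(u^{2},\nu^{2})$ are two solutions, I would apply the Itô formula valid for such $(u,\nu)$ pairs (the version invoked in the abstract, which extends the one used in \cite{DMZ12} to the time-dependent generator) to $\|u^{1}_{t}-u^{2}_{t}\|^{2}$. The drift and diffusion contributions are controlled by \textbf{(H)} and the ellipticity in \eqref{operator} through the contraction condition \textbf{(H)}-(iv), while the measure contribution reduces to
\[
 \int_{0}^{t}\!\!\int_{\cO}\bigl(\widetilde{u^{1}-u^{2}}\bigr)\,d(\nu^{1}-\nu^{2})=\int_{0}^{t}\!\!\int_{\cO}(\tilde u^{1}-S)\,d\nu^{2}+\int_{0}^{t}\!\!\int_{\cO}(\tilde u^{2}-S)\,d\nu^{1}\ge 0,
\]
using that both $\tilde u^{i}\ge S$ quasi-everywhere (a consequence of Proposition \ref{Versiont}) and the Skohorod condition makes the diagonal terms vanish. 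Gronwall then gives $u^{1}=u^{2}$, whence $\nu^{1}=\nu^{2}$ from \eqref{solution}. The main obstacle is identifying the limiting measure $\nu$ as regular in the non-homogeneous setting, since the semigroup-based representation of potentials used in \cite{DMZ12} is unavailable; the workaround is to use Proposition \ref{presentation} directly together with the uniform $\mathcal{K}$-estimate on the potentials associated with $u^{n}$.
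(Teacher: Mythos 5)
Your outline diverges from the paper's route in a way that creates a genuine gap. The paper's proof (following \cite{DMZ12}) is explicitly three-staged: (i) solve the obstacle problem in the \emph{linear} case (where $f,g,h$ depend only on $(t,\omega,x)$) by penalization; (ii) establish an It\^o formula for the difference of two linear solutions; (iii) obtain the nonlinear case by a Picard (fixed-point) iteration whose contraction property comes from assumption {\bf(H)}-(iv). You instead penalize the full nonlinear equation directly, and this is where the argument breaks down: to pass to the limit in the terms $f(\cdot,u^n,\nabla u^n)$, $g(\cdot,u^n,\nabla u^n)$, $h(\cdot,u^n,\nabla u^n)$ you need \emph{strong} $L^2$-convergence of $u^n$ and $\nabla u^n$, not just weak convergence plus strong convergence of Mazur convex combinations $\hat u^n$. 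The convex-combination device used in the proof of Theorem \ref{mainquasicontinuity} works precisely because that proof is reduced to the linear case, where the data $(f^n,g^n,h^n)$ are fixed and one is only identifying a quasi-continuous representative; it cannot be transported to a nonlinear coefficient, since $f(\hat u^n,\nabla\hat u^n)$ bears no relation to the convex combination of the $f(u^{n_k},\nabla u^{n_k})$. This is exactly the obstruction the paper's linear-then-Picard architecture is designed to circumvent, and your proposal does not address it.

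Two further remarks. First, nothing in your estimates gives a reason why the penalized sequence $u^n$ itself (rather than convex combinations) should converge strongly; in the linear case this typically comes from monotonicity of $(u^n)$ via the comparison theorem together with the uniform $\mathcal{K}$-bound, but you do not invoke monotonicity. Second, in the uniqueness step the measure cross-term has the wrong sign as written: expanding $\tilde u^1-\tilde u^2=(\tilde u^1-S)-(\tilde u^2-S)$ and using the two Skohorod conditions gives
\[
\int_0^t\!\!\int_{\cO}(\tilde u^1-\tilde u^2)\,d(\nu^1-\nu^2)
=-\int_0^t\!\!\int_{\cO}(\tilde u^1-S)\,d\nu^2-\int_0^t\!\!\int_{\cO}(\tilde u^2-S)\,d\nu^1\ \le 0,
\]
which is the correct (favourable) sign for Gronwall; your identity reverses it. The conclusion you want is right, but the displayed equality is not. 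In short, you have captured the penalization idea and the use of the $\mathcal{K}$-bound for regularity of $\nu$, but the missing Picard-iteration step to handle the nonlinearity is essential and cannot be replaced by the convex-combination argument.
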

\begin{proof} As we have It\^o's formula and comparison theorem for the solution of non homogeneous SPDE (\ref{SPDE}), see Proposition 9 and Theorem 16 in \cite{DM11},  we can make the same proof as  in the homogeneous case (see \cite{DMZ12}). More precisely, we first establish the result in the linear case  by following Section 5.2 in \cite{DMZ12}. Then, we prove an Itô formula for the difference of two  (linear) solutions of SPDE's with obstacle similarly to Section 5.4 in \cite{DMZ12} and finally conclude thanks to a Picard iteration procedure as in Section 5.5 in \cite{DMZ12}.
\end{proof}

We can also establish the following It\^o formula and comparison theorem for the solution of SPDE (\ref{SPDE}) with obstacle. Here again, the proofs are the same as in \cite{DMZ12}.
\begin{theorem}\label{Itoformula}
Let $u$ be the solution of OSPDE (\ref{SPDE})  and
$\Phi:\mathbb{R}^+\times\mathbb{R}\rightarrow\mathbb{R}$ be a
function of class $\mathcal{C}^{1,2}$. We denote by $\Phi'$ and
$\Phi''$ the derivatives of $\Phi$ with respect to the space
variables and by $\frac{\partial\Phi}{\partial t}$ the partial
derivative with respect to time. We assume that these derivatives
are bounded and $\Phi'(t,0)=0$ for all $t\geq0$. Then for
all $t\in[0,T]$,
\begin{eqnarray*}
&&\int_\mathcal{O}\Phi(t,u_t(x))dx+\int_0^t\int_\cO a_{i,j}(s,x)\Phi''(s,u_s(x))\partial_iu_s(x)\partial_ju_s(x)dxds=\int_\mathcal{O}\Phi(0,\xi(x))dx
\\&&+\int_0^t\int_\mathcal{O}\frac{\partial\Phi}{\partial
s}(s,u_s(x))dxds+\int_0^t(\Phi'(s,u_s),f_s)ds
-\sum_{i=1}^d\int_0^t\int_\mathcal{O}\Phi''(s,u_s(x))\partial_iu_s(x)g_i(x)dxds\\&&+\sum_{j=1}^{+\infty}\int_0^t(\Phi'(s,u_s),h_j)dB_s^j
+\frac{1}{2}\sum_{j=1}^{+\infty}\int_0^t\int_\mathcal{O}\Phi''(s,u_s(x))(h_{j,s}(x))^2dxds\\&&+\int_0^t\int_\mathcal{O}\Phi'(s,\tilde{u}_s(x))\nu(dxds)\,, \qquad\qquad P-a.s.
\end{eqnarray*}
\end{theorem}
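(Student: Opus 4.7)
The plan is to adapt the proof of the analogous It\^o formula from the homogeneous setting \cite{DMZ12} to the present time-dependent operator, using as new input Proposition 9 in \cite{DM11}, which provides It\^o's formula for the non-homogeneous SPDE \emph{without} obstacle. The idea is to apply that formula to a penalized approximation $u^{\varepsilon}$ and to pass to the limit in each term, the delicate point being the identification of the limit of the penalty integral as the $\nu$-integral involving $\tilde{u}$.

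First I would consider the penalized SPDE without obstacle
\begin{equation*}
du^{\varepsilon}_t(x) = \partial_i\bigl(a_{i,j}\partial_j u^{\varepsilon}_t\bigr)dt + \partial_i g_i(\cdot,u^{\varepsilon},\nabla u^{\varepsilon})dt + f(\cdot,u^{\varepsilon},\nabla u^{\varepsilon})dt + \tfrac{1}{\varepsilon}\bigl(u^{\varepsilon}_t - S_t\bigr)^{-}dt + \sum_{j\geq 1} h_j(\cdot)dB^j_t,
\end{equation*}
with $u^{\varepsilon}_0=\xi$. Proposition 9 of \cite{DM11} applied to $\Phi(s,u^{\varepsilon}_s)$ produces an identity differing from the target one only in that the $\nu$-integral is replaced by the drift term
\begin{equation*}
I^{\varepsilon}(t) := \int_0^t\!\!\int_{\cO}\Phi'(s,u^{\varepsilon}_s(x))\,\tfrac{1}{\varepsilon}\bigl(u^{\varepsilon}_s(x)-S_s(x)\bigr)^{-}dx\,ds.
\end{equation*}
Setting $d\nu^{\varepsilon} := \tfrac{1}{\varepsilon}(u^{\varepsilon}-S)^{-}\,dx\,ds$, the existence part of Theorem \ref{maintheo} provides (along a subsequence of convex combinations) $u^{\varepsilon}\to u$ in $\cH_T$ and weak convergence of $\nu^{\varepsilon}$ to $\nu$ in the sense of regular measures arising from parabolic potentials. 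The deterministic and stochastic linear/Lipschitz terms pass to the limit by standard Burkholder--Davis--Gundy and dominated convergence arguments using boundedness of $\Phi'$, $\Phi''$; the energy term $\int_0^t\!\int_\cO a_{i,j}\Phi''(s,u^{\varepsilon}_s)\partial_i u^{\varepsilon}_s\partial_j u^{\varepsilon}_s\,dx\,ds$ converges by strong $L^2$-convergence of $\nabla u^{\varepsilon}$ to $\nabla u$ combined with the a.e.\ convergence of $\Phi''(s,u^{\varepsilon}_s)$.

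The hard part is proving $\lim_{\varepsilon\downarrow 0}I^{\varepsilon}(t) = \int_0^t\!\int_{\cO}\Phi'(s,\tilde{u}_s(x))\,\nu(dx,ds)$. I would decompose
\begin{equation*}
I^{\varepsilon}(t) = \int_0^t\!\!\int_{\cO}\bigl[\Phi'(s,u^{\varepsilon}_s)-\Phi'(s,\tilde{u}_s)\bigr]\,d\nu^{\varepsilon} + \int_0^t\!\!\int_{\cO}\Phi'(s,\tilde{u}_s)\,d\nu^{\varepsilon}.
\end{equation*}
The first term is controlled via the Lipschitz property of $\Phi'$ and the uniform $\cK$-bound on the parabolic potentials $v^{\varepsilon}$ associated with $u^{\varepsilon}$ (Lemma \ref{estimoftau} and the penalization estimates), hence vanishes as $u^{\varepsilon}\to u$ in $L^2$. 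For the second term, Theorem \ref{mainquasicontinuity} yields quasi-continuity of $\tilde{u}$, so that $(s,x)\mapsto\Phi'(s,\tilde{u}_s(x))$ is quasi-continuous; since $\Phi'(\cdot,0)=0$ and $\Phi'$ is Lipschitz in the space variable, this map is dominated in absolute value by a constant times $|\tilde{u}|$, itself dominated by an element of $\cP$. The sequence $v^{\varepsilon}\in\cP$ is bounded in $\cK$ and converges weakly in $L^2([0,T];H^1_0(\cO))$ to the potential $v$ with $\nu^v=\nu$, so Lemma \ref{convergemeas} gives exactly the required convergence. Assembling the limits and noting pathwise continuity in $t$ of both sides then yields the identity a.s.\ for every $t\in[0,T]$.
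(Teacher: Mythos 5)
Your overall route (penalize, invoke Proposition 9 of \cite{DM11} for the penalized SPDE without obstacle, then pass to the limit term by term, with Lemma \ref{convergemeas} providing the convergence of the measure integrals) is indeed the one the paper has in mind when it writes ``the proofs are the same as in \cite{DMZ12}''. However, the critical step of your argument --- the limit of the penalty integral --- contains a genuine gap.

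You decompose
$I^{\varepsilon}(t)=\int_0^t\!\int_{\cO}\bigl[\Phi'(s,u^{\varepsilon}_s)-\Phi'(s,\tilde{u}_s)\bigr]d\nu^{\varepsilon}
+\int_0^t\!\int_{\cO}\Phi'(s,\tilde{u}_s)\,d\nu^{\varepsilon}$
and claim that the first piece ``vanishes as $u^{\varepsilon}\to u$ in $L^2$'', citing the $\mathcal{K}$-bound on the associated parabolic potentials. That bound only controls the \emph{total mass} of $\nu^{\varepsilon}$; the density $\tfrac{1}{\varepsilon}(u^{\varepsilon}-S)^-$ is bounded in $L^1$, not in $L^2$, so $\|u^{\varepsilon}-\tilde u\|_{L^2}\to 0$ gives no control of $\int|u^{\varepsilon}-\tilde u|\,d\nu^{\varepsilon}$: the measure could concentrate exactly where $|u^{\varepsilon}-\tilde u|$ is large. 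Lemma \ref{estimoftau} is about $\kappa(u,u^+(0))$ and says nothing about the $L^2$-norm of the penalized density. As stated, the first term has not been shown to go to zero.

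The decomposition that actually closes the argument anchors at the obstacle, not at $\tilde u$: write $\Phi'(s,u^{\varepsilon}_s)-\Phi'(s,S_s)$ plus $\Phi'(s,S_s)$. On the support of $\nu^{\varepsilon}$ one has $|u^{\varepsilon}_s-S_s|=(u^{\varepsilon}_s-S_s)^-$, hence
$\bigl|\int[\Phi'(s,u^{\varepsilon}_s)-\Phi'(s,S_s)]\,d\nu^{\varepsilon}\bigr|
\le \|\Phi''\|_\infty\,\tfrac{1}{\varepsilon}\int[(u^{\varepsilon}-S)^-]^2\,dx\,ds$,
and the key a priori estimate from the existence/penalization step, namely
$\tfrac{1}{\varepsilon}E\int_0^T\!\int_\cO[(u^{\varepsilon}-S)^-]^2\,dx\,ds\to 0$,
is exactly what makes this piece vanish. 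This estimate is nontrivial (it is tied to the Skorokhod condition, not to any $\mathcal{K}$-bound) and is precisely the ingredient missing from your sketch. One then passes to the limit in $\int\Phi'(s,S_s)\,d\nu^{\varepsilon}$ via Lemma \ref{convergemeas} (with $S$ quasi-continuous and dominated by an element of $\mathcal{P}$), and finally replaces $\Phi'(s,S_s)$ by $\Phi'(s,\tilde u_s)$ under $d\nu$ using the Skorokhod condition $\int(\tilde u-S)\,d\nu=0$, which forces $\tilde u=S$ $\nu$-a.e. If you insist on your decomposition, you must still reduce the first piece to $\int(u^{\varepsilon}-S)^-d\nu^{\varepsilon}+\int(\tilde u-S)\,d\nu^{\varepsilon}$ and invoke both the penalization estimate and the Skorokhod condition; $L^2$-convergence of $u^{\varepsilon}$ alone cannot do it.

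One further technical remark: you pass to convex combinations of the penalized solutions to get strong convergence, but $\Phi$ is nonlinear, so $\Phi(s,\hat u^n_s)$ is not a convex combination of the $\Phi(s,u^{\varepsilon_k}_s)$ and the It\^o identities do not average. You need strong convergence of the original penalized sequence (which the penalization method does provide, but it should be invoked explicitly), not the Mazur-type convexification used in the weak compactness argument.
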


This Itô formula naturally leads to a comparison theorem, the proof being the same as in the homogeneous case (see Theorem 8 in \cite{DMZ12}). More precisely,
consider $(u^1,\nu^1 )=\mathcal{R} (\xi^1, f^1,g,h,S^1)$ the solution of the SPDE with obstacle
\begin{equation}\left\{ \begin{split}&du^1_t(x)=\partial_i(a_{i,j}(t,x)\partial_ju^1_t(x)+g_i(t,x,u^1_t(x),\nabla
u^1_t(x))dt+f^1(t,x,u^1_t(x),\nabla
u^1_t(x))dt\nonumber\\&\ \ \ \ \quad \quad+\sum_{j=1}^{+\infty}h_j(t,x,u^1_t(x),\nabla
u^1_t(x))dB^j_t +\nu^1 (x, dt)\\
&u^1\geq S^1\ ,\  u^1_0=\xi^1 ,\  \end{split}\right.\end{equation}
where we assume $(\xi^1, f^1 ,g,h)$ satisfy hypotheses {\bf (H)}, {\bf (I)} and {\bf (O)}. \\
We consider another coefficients $f^2$ which satisfies the same assumptions as $f^1$, another obstacle $S^2$ which satisfies {\bf (O)} and another initial condition $\xi^2$ belonging to $L^2 (\Omega \times \cO)$ and $\mathcal{F}_0$ adapted such that $\xi^2\geq S^2_0$. We denote by $(u^2 ,\nu^2 )=\mathcal{R} (\xi^2, f^2,g,h,S^2)$.
%
\begin{theorem} \label{comparison}Assume that the following conditions
hold\begin{enumerate}
      \item $\xi^1\leq\xi^2,\ dx\otimes dP-a.e.$
      \item $f^1(u^1,\nabla u^1)\leq f^2(u^1,\nabla u^1),\ dt\otimes dx\otimes dP-a.e.$
      \item $S^1\leq S^2,\ dt\otimes dx\otimes dP-a.e.$
    \end{enumerate}
Then for almost all $\omega\in\Omega$, $u^1(t,x)\leq u^2(t,x)\
q.e.$\end{theorem}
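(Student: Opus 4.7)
My plan is to apply an It\^o-type formula to a smooth convex approximation of $((u^1-u^2)^+)^2$ and exploit the minimal Skohorod condition, exactly as in the homogeneous case \cite{DMZ12}, where the Itô formula for the difference of two OSPDE solutions is established in Section 5.4. Set $\bar u=u^1-u^2$, $\bar\xi=\xi^1-\xi^2\le 0$, $\bar\nu=\nu^1-\nu^2$, and write $\bar f_s=f^1(s,\cdot,u^1_s,\nabla u^1_s)-f^2(s,\cdot,u^2_s,\nabla u^2_s)$, $\bar g_s=g(s,\cdot,u^1_s,\nabla u^1_s)-g(s,\cdot,u^2_s,\nabla u^2_s)$, $\bar h_s=h(s,\cdot,u^1_s,\nabla u^1_s)-h(s,\cdot,u^2_s,\nabla u^2_s)$. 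I pick a sequence $\Phi_n\in C^{1,2}$ with $\Phi_n'\ge 0$, $\Phi_n'(r)=0$ for $r\le 0$, $\Phi_n',\Phi_n''$ bounded and $\Phi_n(r)\nearrow (r^+)^2$, $\Phi_n'(r)\to 2r^+$, $\Phi_n''(r)\to 2\mathbf 1_{\{r>0\}}$.

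Applying the Itô formula (as in Theorem~\ref{Itoformula} in difference form) to $\Phi_n(\bar u_t)$, I obtain an identity containing: the initial value $\int_\cO\Phi_n(\bar\xi)\,dx=0$; the non-negative energy term $\int_0^t\!\!\int_\cO a_{i,j}\Phi_n''(\bar u_s)\partial_i\bar u_s\partial_j\bar u_s\,dx\,ds$ (bounded below by $\lambda\int_0^t\!\!\int_\cO\Phi_n''(\bar u_s)|\nabla\bar u_s|^2dx\,ds$ by (\ref{operator})); drift and Lipschitz-controlled terms involving $\bar f,\bar g,\bar h$; the stochastic integral; and crucially the measure term $\int_0^t\!\!\int_\cO\Phi_n'(\widetilde{\bar u}_s)\,\bar\nu(dx\,ds)$. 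The minimal Skohorod condition gives $\int(\tilde u^i-S^i)d\nu^i=0$, so $\nu^i$ is supported on $\{\tilde u^i=S^i\}$; on the support of $\nu^1$ one has $\tilde u^1=S^1\le S^2\le \tilde u^2$, hence $\widetilde{\bar u}\le 0$ and $\Phi_n'(\widetilde{\bar u})=0$, which kills $\int\Phi_n'\,d\nu^1$, while $\int\Phi_n'(\widetilde{\bar u})\,d\nu^2\ge 0$ since $\Phi_n'\ge 0$ and $\nu^2\ge 0$. Thus the measure contribution to the identity is $\le 0$.

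Next I pass to the limit $n\to\infty$ and use the monotone/dominated convergence in $L^1$; the energy term yields $2\lambda\int_0^t\|\nabla\bar u^+_s\|^2 ds$. Taking expectation kills the martingale part. For the drift, write $\bar f_s=[f^1(s,u^1_s,\nabla u^1_s)-f^2(s,u^1_s,\nabla u^1_s)]+[f^2(s,u^1_s,\nabla u^1_s)-f^2(s,u^2_s,\nabla u^2_s)]$; by hypothesis (ii) the first bracket is $\le 0$ and is multiplied by $\Phi_n'(\bar u_s)\ge 0$, so it contributes non-positively; the second is handled by the Lipschitz bound in $(\mathbf H)$. The divergence term with $\bar g$ is integrated by parts and split likewise, and the quadratic variation involves $\|\bar h_s\|^2$. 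Young's inequality combined with the Lipschitz constants $\alpha,\beta$ and the contraction assumption $2\alpha+\beta^2<2\lambda$ allows the $\nabla\bar u^+$ contributions on the right to be absorbed by the left-hand energy term, leaving a Gronwall inequality for $E\|\bar u^+_t\|^2$. Hence $\bar u^+=0$ a.e., and since both $u^1,u^2$ have quasi-continuous versions their difference does as well, so $u^1\le u^2$ quasi-everywhere.

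The main obstacle is the rigorous justification that $\int_0^t\!\!\int_\cO\Phi_n'(\widetilde{\bar u}_s)\,d\nu^1=0$; this requires the quasi-continuous version $\widetilde{\bar u}=\widetilde{u^1}-\widetilde{u^2}$ to be well-defined $\nu^1$-a.e., which in turn uses that $\nu^1$ does not charge polar sets (Proposition~\ref{presentation}) together with Theorem~\ref{mainquasicontinuity} and Assumption~$(\mathbf O)$ guaranteeing quasi-continuity of $u^1,u^2$ and $S^1,S^2$. The convergence lemma (Lemma~\ref{convergemeas}) provides the technical tool, precisely as in \cite{DMZ12}; the fact that $a$ is now time-dependent does not interfere since the only place ellipticity is invoked is in the energy bound, which still holds by (\ref{operator}).
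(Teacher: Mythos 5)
Your plan is correct and is essentially the route the paper takes: it reduces the statement, via the It\^o formula for the difference of two OSPDE solutions applied to a $C^{1,2}$ convex regularization of $(r^+)^2$, to the argument of Theorem~8 in \cite{DMZ12} (kill the $\nu^1$-integral using the minimal Skohorod condition together with $S^1\le S^2$, drop the non-positive $-\nu^2$ term, split the drift using hypothesis (ii) plus Lipschitz, absorb gradient terms by $2\alpha+\beta^2<2\lambda$ and the ellipticity \eqref{operator}, and conclude by Gronwall). One small slip: the fact that regular measures do not charge polar sets is the unnumbered proposition quoting Proposition~2.10 of \cite{DMZ12max}, not Proposition~\ref{presentation}, which is only the representation of parabolic potentials.
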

\section{Maximum principle for local solutions of the OSPDE}
\subsection{$L^{p,q}$-spaces}
\label{lpq}
For each $t>0$ and for all real numbers $p,\,q\geq 1$, we denote by $%
L^{p,q}([0,t]\times {\cal O})$ the space of (classes of) measurable
functions $u:[0,t]\times {\cal O}\longrightarrow \mathbb{{R}}$ such
that
$$
\Vert u\Vert _{p,q;\,t}:=\left( \int_0^t\left( \int_{{\cal O}%
}|u(s,x)|^p\,dx\right) ^{q/p}\,ds\right) ^{1/q} $$ is finite. The
limiting cases with $p$ or $q$ taking the value $\infty $ are also
considered with the use of the essential sup norm.

The space of  measurable functions $u:\mathbb{{R}}_{+}\rightarrow
L^2\left(
{\cal O}\right) $ such that $\left\| u\right\| _{2,2;t}<\infty ,$ for each $%
t\ge 0,$ is denoted by $L_{loc}^2\left( \mathbb{{R}}_{+};L^2\left( {\cal O}%
\right) \right) ,$ where $\R_+$ denotes the set of non-negative real numbers. Similarly, the space $L_{loc}^2\left( \mathbb{{R}}%
_{+};H_0^1\left( {\cal O}\right) \right) $ consists of all
measurable
functions $u: \mathbb{{R}}_{+}\rightarrow H_0^1\left( {\cal O}\right) $ such that%
$$ \left\| u\right\| _{2,2;t}+\left\| \nabla u\right\|
_{2,2;t}<\infty , $$ for any $t\ge 0.$\\

 We recall that the Sobolev inequality states that%
$$ \left\| u\right\| _{2^{*}}\le c_S\left\| \nabla u\right\| _2,
$$ for each $u\in H_0^1\left( {\cal O}\right) ,$ where $c_S>0$ is
a constant
that depends on the dimension and $2^{*}=\frac{2d}{d-2}$ if $d>2,$ while $%
2^{*}$ may be any number in $]2,\infty [$ if $d=2$ and $2^{*}=\infty $ if $%
d=1.$\\ Finally, we introduce the following norm which is obtained by interpolation in $L^{p,q}$-spaces:
$$ \left\| u\right\| _{\#;t}=\left\|
u\right\| _{2,\infty ;t}\vee \left\| u\right\|
_{2^{*},2;t}, $$
and we denote by $L_{\# ;t}$ the set of functions $u$ such that $\left\| u \right\| _{\#;t}$ is finite. Its dual space is a functional space: $L^*_{\#;t}$ equipped with the norm $\parallel\ \parallel^*_{\#;t}$ and we have
\begin{equation}\label{Holder2} \int_0^t\int_{{\cal O}}u\left( s,x\right) v\left( s,x\right)
dxds\le \left\| u\right\| _{\#;t}\left\| v\right\|_{\#;t}^*, \end{equation} for
any $u\in L_{\#;t}$ and $v\in L_{\#;t}^*. $\\

\subsection{Local solutions}
 We define $\HH_{loc}=\HH_{loc}(\mathcal{O})$ to be the set of
$H^1_{loc} (\OO )$-valued predictable processes defined on $[0,T]$ such that for any
compact subset $K$ in $\OO$:

\[
\left( E \sup_{0\leq s\leq T}\int_K u_{s}(x)^{2}\, dx
+E\int_{0}^{T}\int_K |\nabla u_{s}(x)|^2\, dx ds\right) ^{1/2}\;< \;
\infty .
\]

\begin{definition}
We say that a Radon measure $\nu$ on $[0,T[\times \cO$ is a {local regular measure} if for any  non-negative $\phi$  in $\mathcal{C}_c^\infty(\cO)$,  $\phi\nu$ is a regular measure.
\end{definition}
In \cite{DMZ12max} (see Proposition 2.10), we have proved:
\begin{proposition}
Local regular measures do not charge polar sets (i.e. sets of capacity 0).
\end{proposition}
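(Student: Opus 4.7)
The strategy is to reduce the local statement to the already known global one, via a cutoff argument combined with the inner regularity of Radon measures. Recall that a (global) regular measure $\nu^v$ associated to a parabolic potential $v \in \mathcal{P}$ does not charge polar sets; this is a classical result of the parabolic potential theory of M.~Pierre which has been used throughout the paper (and was the content of the corresponding global statement in \cite{DMZ12max}). We will upgrade this fact from regular to local regular measures.

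\textbf{Main steps.} Let $\nu$ be a local regular measure on $[0,T[\times \mathcal{O}$ and let $E \subset [0,T[\times \mathcal{O}$ be polar, i.e.\ $\operatorname{cap}(E)=0$. First, since $\nu$ is Radon, inner regularity gives
\[
\nu(E) = \sup\bigl\{ \nu(K) : K \subset E,\ K \text{ compact in } [0,T[\times \mathcal{O}\bigr\},
\]
so it suffices to show $\nu(K)=0$ for every compact $K \subset E$. Second, fix such a compact $K$ and let $\pi_{\mathcal{O}}(K)$ be its projection onto $\mathcal{O}$, which is compact and contained in $\mathcal{O}$. Choose a nonnegative cutoff $\phi \in \mathcal{C}_c^\infty(\mathcal{O})$ with $\phi \equiv 1$ on $\pi_{\mathcal{O}}(K)$, so that $\phi \equiv 1$ on $K$. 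By the definition of a local regular measure, $\phi\nu$ is a regular measure. Third, since $K \subset E$ is polar and regular measures do not charge polar sets, we obtain
\[
0 = (\phi\nu)(K) = \int_K \phi\, d\nu = \int_K 1\, d\nu = \nu(K).
\]
Combining with the first step, $\nu(E)=0$.

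\textbf{Main obstacle.} There is no deep technical difficulty here; the only point requiring care is the choice of the cutoff and the fact that compact subsets of $[0,T[\times \mathcal{O}$ project to compact subsets of $\mathcal{O}$ lying in the interior, which is what allows the cutoff $\phi$ to be taken in $\mathcal{C}_c^\infty(\mathcal{O})$. The heart of the argument is really the already-established property that (global) regular measures do not charge polar sets, which we simply localize through a partition-of-unity-type cutoff together with inner regularity of Radon measures.
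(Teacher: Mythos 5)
Your argument is correct, and since the paper itself does not reproduce a proof here (it only cites Proposition 2.10 of \cite{DMZ12max}), I can only compare against the expected localization argument, which is essentially what you give: reduce to the global fact that regular measures (in the sense of Pierre) do not charge polar sets, via cutoffs in $\mathcal{C}_c^\infty(\mathcal{O})$.

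One small remark: the detour through inner regularity of Radon measures and projections of compact sets, while correct, can be avoided. Take an exhausting sequence of nonnegative cutoffs $\phi_n \in \mathcal{C}_c^\infty(\mathcal{O})$ with $\phi_n \uparrow \mathbf{1}_{\mathcal{O}}$ pointwise. For a polar Borel set $E$, each $\phi_n\nu$ is a regular measure by definition of local regular measure, hence $\int_E \phi_n \, d\nu = (\phi_n\nu)(E) = 0$, and monotone convergence gives $\nu(E) = \lim_n \int_E \phi_n\, d\nu = 0$. This avoids invoking inner regularity on arbitrary Borel sets (which, although valid here since $[0,T[\times\mathcal{O}$ is $\sigma$-compact so $\nu$ is $\sigma$-finite and moderate, is a point one would otherwise have to justify). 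Also worth making explicit: the global input you rely on, namely that a regular measure $\nu^v$ with $v\in\mathcal{P}$ charges no polar set, is a theorem of Pierre's parabolic potential theory and is implicit in Proposition~\ref{approximation}, where $\int \tilde{\varphi}\, d\nu^v$ is well defined even though $\tilde{\varphi}$ is only determined up to a polar set.
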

We can now define the notion of local solution:
\begin{definition} A pair
$(u,\nu)$ is said to be a local solution of the problem
(\ref{SPDEO}) if
\begin{enumerate}
    \item $u\in\mathcal{H}_{loc}$, $u(t,x)\geq S(t,x),\ dP\otimes dt\otimes
    dx-a.e.$ and $u_0(x)=\xi,\ dP\otimes dx-a.e.$;
    \item $\nu$ is a local random regular measure defined on
    $[0,T[\times\mathcal{O}$;
    \item the following relation holds almost surely, for all
    $t\in[0,T]$ and all $\varphi\in\mathcal{D}$,
     \begin{equation}\begin{split}\label{solutionlocal}(u_t,\varphi_t)=&(\xi,\varphi_0)+\int_0^t(u_s,\partial_s\varphi_s)ds-\sum_{i,j}\int_0^t\int_{\cO}a_{i,j}(s,x)\partial_i u_s(x)\partial_j\varphi_s(x)dx\, ds\\&-\sum_{i=1}^d\int_0^t(g^i_s(u_s,\nabla u_s),\partial_i\varphi_s)ds
    +\int_0^t(f_s(u_s,\nabla u_s),\varphi_s)ds\\&+\sum_{j=1}^{+\infty}\int_0^t(h^j_s(u_s,\nabla u_s),\varphi_s)dB^j_s+\int_0^t\int_{\mathcal{O}}\varphi_s(x)\nu(dx,ds).\end{split}\end{equation}
    \item $u$ admits a quasi-continuous version, $\tilde{u}$, and we have  $$\int_0^T\int_\cO(\tilde{u}(s,x)-S(s,x))\nu(dx,ds)=0,\ \
    P-a.s.$$
  \end{enumerate}
\end{definition}
We denote by $\cR_{loc}(\xi,f,g,h,S)$ the set of all the local
solutions $(u,\nu)$.

\subsection{Hypotheses}
In order to get  some $L^p-$ estimates for the uniform
norm  of the positive part of the solution of  (\ref{SPDEO}),  we need stronger integrability conditions on the coefficients and the initial condition. To this end, we consider the following assumptions: for
$p\geq2$:

\textbf{Assumption (HI$\mathbf{2
p}$)}$$E\left(\left\|\xi\right\|_\infty^p+\left\|
f^0\right\|^2_{2,2;T}+\left\| |g^0 |\right\|^2_{2,2;T}+\left\| |h^0
|\right\|^2_{2,2;T}\right)<\infty .$$

\textbf{Assumption (OL):} The obstacle $S: [0,T]\times \Omega\times \cO\rightarrow \R$ is  an
adapted random field, almost surely  quasi-continuous, such that $S_0 \leq \xi$
$P$-almost surely and controlled by a \textbf{local} solution of an
SPDE, i.e. $\forall t\in[0,T],$
\begin{equation*}S_t\leq S'_t,\quad dP\otimes dt\otimes dx-a.e.\end{equation*} where
$S'$ is a \textbf{local} solution (for the definition of local solution see for example Definition 1 in \cite{DMS09}) of the linear SPDE
\begin{equation*}\left\{\begin{array}{ccl}
 dS'_t&=&LS'_tdt+f'_tdt+\sum_{i=1}^d \partial_i g'_{i,t}dt+\sum_{j=1}^{+\infty}h'_{j,t}dB^j_t\\
                  S'(0)&=&S'_0 .
\end{array}\right. \end{equation*}

\textbf{Assumption (HIL)} $$  E \int_K |\xi (x)|^2 dx + E \,
\int_{0}^T \int_K \big( |f_s^0(x)|^2 + |g_s^0 (x)|^2 + |h_s^0(x)|^2
\, \big) dx ds < \infty,
$$for any compact set $K \subset \mathcal{O}$.

\textbf{Assumption (HOL)} $$  E \int_K |S'_0|^2 dx + E \, \int_{0}^T
\int_K \big( |f'_t(x)|^2 + |g'_t (x)|^2 + |h'_t (x)|^2 \, \big) dx
dt < \infty
$$for any compact set $K \subset \mathcal{O}$.

\textbf{Assumption (HO$\mathbf{\infty p}$)}
$$S'_0\in L^\infty(\Omega\times\cO)\ and\ E\left((\left\|
f'\right\|_{\infty,\infty;T})^p+(\left\|  |g'
|^2\right\|_{\infty,\infty;T})^{p/2}+(\left\|  |h'
|^2\right\|_{\infty,\infty;T})^{p/2}\right)<\infty .$$

As our approach is based on some estimates of $u-S'$ that we obtain thanks to the Itô formula, we need to   introduce the following functions: $$\bar{f}(t,\omega,x,y,z)=f(t,\omega,x,y+S'_t,z+\nabla
S'_t)-f'(t,\omega,x)$$
$$\bar{g}(t,\omega,x,y,z)=g(t,\omega,x,y+S'_t,z+\nabla S'_t)-g'(t,\omega,x)$$
$$\bar{h}(t,\omega,x,y,z)=h(t,\omega,x,y+S'_t,z+\nabla S'_t)-h'(t,\omega,x).$$
And we consider: \\
\textbf{Assumption (HD$\mathbf{\theta p}$)} $$E((\left\|
\bar{f}^0\right\|^*_{\theta;T})^p+(\left\|  |\bar{g}^0
|^2\right\|^*_{\theta;T})^{p/2}+(\left\|  |\bar{h}^0
|^2\right\|^*_{\theta;T})^{p/2})<\infty .$$

This assumption is fulfilled in the following case:
\begin{example}
If $\left\|\nabla S'\right\|^*_{\theta;T},\ \left\|f^0\right\|^*_{\theta;T},\ \left\|g^0\right\|^*_{\theta;T}\ and\ \left\|h^0\right\|^*_{\theta;T}$ belong to $ L^p(\Omega,P),
$ and assumptions {\bf (H)} and  {\bf(HO$\mathbf{\infty p}$)}  hold, then:\vspace{0.5cm}
\\$\bar{f}$ satisfies the Lipschitz condition with the same Lipschitz coefficients:
 \begin{eqnarray*}\left|\bar{f}(t,\omega,x,y,z)-\bar{f}(t,\omega,x,y',z')\right|&=&\big|f(t,\omega,x,y+S'_t(x),z+\nabla S'_t(x))+f'(t,\omega,x)\\&-&f(t,\omega,x,y'+S'_t(x),z'+\nabla S'_t(x))-f'(t,\omega,x)\big|\\&\leq& C\left|y-y'\right|+C\left|z-z'\right|.\end{eqnarray*}

$\bar{f}$ satisfies the integrability condition:
\begin{eqnarray*}\left\|\bar{f}^0\right\|^*_{\theta;T}&=&\left\|f(S',\nabla
S')-f'\right\|^*_{\theta;T}\leq \left\|f(S',\nabla
S')\right\|^*_{\theta;T}+\left\|f'\right\|^*_{\theta;T}\\&\leq&\left\|f^0\right\|^*_{\theta;T}+C\left\|S'\right\|^*_{\theta;T}+C\left\|\nabla
S'\right\|^*_{\theta;T}+\left\|f'\right\|_{\infty,\infty;T}.\end{eqnarray*}

And the same for $\bar{g}$ and $\bar{h}$, which proves that
{\bf (HD$\mathbf{\theta p}$)} holds.
\end{example}
\subsection{The main results}
We now introduce the lateral boundary condition that we consider:
\begin{definition}
If $u$ belongs to $\cH_{loc}$, we say that $u$ is non-negative
on the boundary of $\cO$ if $u^+$ belongs to $\cH_T$ and
we denote it simply: $u\leq0$ on $\partial\cO$. More generally, if $M$ is a random field defined on $[0,T]\times \cO$, we note $u\leq M$ on $\partial \cO$ if $u-M\leq 0$ on $\partial\cO$.
\end{definition}

From now on, we can follow step by step the proof of the maximum principle for OSPDE in the homogeneous case in \cite{DMZ12max}: the first step consists in establishing an estimate for the positive part of the solution with null Dirichlet condition. To get this estimate,
we can adapt to our case the arguments of proof of Proposition 5.2 in \cite{DMZ12max}, then Itô formula for the difference of 2 elements in $\mathcal{R}_{loc}(\xi, f,g,h,S)$ (Proposition 5.3 in \cite{DMZ12max}). This yields  the comparison theorem (see Theorem 5.4 in \cite{DMZ12max}):
\begin{theorem}{\label{comparison}}
Assume that $\partial \cO$ is Lipschitz. Let $(\xi^i,f^i,g,h,S^i)$, $i=1,2$, satisfy assumptions {\bf(H)}, {\bf(HIL)}, {\bf(OL)} and {\bf(HOL)}.
 Consider $(u^i,\nu^i)\in {\cal R}_{loc}\left( \xi ^i,f^i,g,h,S^i\right)
,i=1,2$ and suppose that the process $\left( u^1-u^2\right) ^{+}$
belongs to $\HH_T
$ and that one has%
$$ E\left( \left\| f^1\left(.,., u^2,\nabla u^2\right) -f^2\left(.,.,
u^2,\nabla u^2\right) \right\| ^*_{\#;t}\right) ^2<\infty ,\;\;
\mbox{ for all} \quad t\in[0,T].$$ If $\xi ^1\le \xi ^2$ a.s.,
$f^1\left(t,\omega, u^2,\nabla u^2\right) \le f^2\left(t,\omega,
u^2,\nabla u^2\right) $,  $dt\otimes dx \otimes dP$-a.e. and
$S^1\leq S^2$, $dt\otimes dx\otimes dP$-a.s., then one has $u^1
(t,x)\le u^2 (t,x)$, $dt \otimes dx\otimes dP$-a.e.
\end{theorem}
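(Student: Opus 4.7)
The plan is to set $\bar u := u^1 - u^2$ and prove $\bar u^+ \equiv 0$ in $L^2(\Omega\times[0,T]\times\cO)$ via an It\^o-type identity applied to the convex function $y \mapsto (y^+)^2$, followed by Gronwall's lemma. Since by hypothesis $\bar u^+ \in \cH_T$, the estimate can be genuinely carried out on the whole of $\cO$ even though $u^1$, $u^2$ are only local solutions; the boundary hypothesis $\bar u^+ \in \cH_T$ replaces a global Dirichlet condition.

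The first step is to establish the non-homogeneous analogue of Proposition~5.3 in \cite{DMZ12max}, namely an It\^o formula for $\|\bar u^+_t\|^2$. It is derived from Theorem~\ref{Itoformula} but with the time-inhomogeneous Green function $G(t,x;s,y)$ of \eqref{weak:fundamental} taking the role of the semigroup used in the homogeneous case, so the approximation scheme from the proof of Theorem~\ref{mainquasicontinuity} can be adapted. After testing and expanding, one obtains a.s.\ for every $t\in[0,T]$ an identity of the form
\begin{equation*}
\begin{split}
\|\bar u^+_t\|^2 + 2\int_0^t\!\!\int_\cO a_{i,j}\partial_i\bar u_s\partial_j\bar u_s\mathbf{1}_{\{\bar u_s>0\}}\,dx\,ds =\ & \|(\xi^1-\xi^2)^+\|^2 + 2\int_0^t(\bar u^+_s,\,f^1(u^1,\nabla u^1)-f^2(u^2,\nabla u^2))\,ds \\
& - 2\int_0^t\!\!\int_\cO \partial_i\bar u^+_s\,(g_i(u^1,\nabla u^1)-g_i(u^2,\nabla u^2))\,dx\,ds \\
& + \int_0^t\!\!\int_\cO \mathbf{1}_{\{\bar u_s>0\}}|h(u^1,\nabla u^1)-h(u^2,\nabla u^2)|^2\,dx\,ds \\
& + M_t + 2\int_0^t\!\!\int_\cO \tilde{\bar u}^+_s\,(d\nu^1-d\nu^2),
\end{split}
\end{equation*}
where $M_t$ is a local martingale.

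The key step is to show the measure integral is non-positive. On the quasi-open set $\{\tilde u^1>\tilde u^2\}$ one has quasi-everywhere $\tilde u^1>\tilde u^2\ge S^2\ge S^1$, so the Skorokhod condition $\int(\tilde u^1-S^1)\,d\nu^1=0$ forces $\nu^1$ not to charge $\{\tilde u^1>\tilde u^2\}$; since $\tilde{\bar u}^+$ vanishes off this set, $\int\tilde{\bar u}^+_s\,d\nu^1 = 0$. The remaining term $-2\int\tilde{\bar u}^+_s\,d\nu^2$ is non-positive because $\nu^2\ge 0$ and $\tilde{\bar u}^+\ge 0$. This argument uses essentially the quasi-continuity of $\tilde u^1,\tilde u^2$ (Theorem~\ref{mainquasicontinuity}) together with the fact that local regular measures do not charge polar sets.

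Closing the estimate is then routine. Uniform ellipticity \eqref{operator} bounds the Dirichlet form from below by $2\lambda\|\nabla\bar u^+_s\|^2$; the hypothesis $f^1(u^2,\nabla u^2)\le f^2(u^2,\nabla u^2)$ combined with Lipschitz assumption (H) turns the $f$-term into a Lipschitz expression in $(\bar u^+,\nabla\bar u^+)$ plus a non-positive remainder; the contraction $2\alpha+\beta^2<2\lambda$ absorbs the quadratic gradient contributions coming from $g$ and $h$; the duality inequality \eqref{Holder2} handles the $L^*_{\#;t}$-integrable parts; and Burkholder--Davis--Gundy controls $E\sup_{s\le t}|M_s|$. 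Gronwall then gives $E\|\bar u^+_t\|^2=0$ for every $t$, whence the conclusion. The main obstacle I anticipate is the very first step: the time-dependence of $a$ precludes a direct appeal to the semigroup-based approximation of \cite{DMZ12max}, so the It\^o formula for the squared positive part of the difference must be reconstructed via Green-function mollifications along the lines of Section~\ref{quasi-contSPDE}. Once this formula and the Skorokhod/quasi-continuity cancellation are in place, the energy argument proceeds exactly as in the homogeneous setting.
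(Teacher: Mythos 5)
Your proposal is correct and follows essentially the same route the paper takes: the paper explicitly delegates to the chain Proposition~5.3 (It\^o formula for the difference of two local solutions) leading to Theorem~5.4 (comparison) of \cite{DMZ12max}, with the semigroup replaced by the time-inhomogeneous Green function as in Section~\ref{quasi-contSPDE}, and your It\^o identity for $\|\bar u^+_t\|^2$, the Skorokhod/quasi-continuity cancellation of the $\nu^1$ term together with the sign of the $\nu^2$ term, and the ellipticity-plus-contraction energy estimate closed by Gronwall are exactly the ingredients that argument uses.
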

By adapting the proof of Theorem 5.5 in \cite{DMZ12max}, we  get first the maximum principle in the case $u\leq 0$ on $\partial \cO$:
\begin{theorem}
\label{maxprinc} Assume that $\partial \cO$ is Lipschitz and suppose that Assumptions {\bf (H)}, {\bf (OL)},
{\bf (HOL)},{\bf (HI$\mathbf{2 p}$)}, {\bf (HO$\mathbf{\infty p}$)}
and {\bf (HD$\mathbf{\theta p}$)} hold  for some $\theta \in [0,1[$,
$p\geq2$ and that the constants of the Lipschitz conditions satisfy
$$\alpha +\frac{\beta ^2}2+72\beta ^2<\lambda. $$ Let $(u,\nu)\in
{\cal R}_{loc}\left( \xi ,f,g,h,S\right) $ be such that
$u^{+}\in \HH.$  Then one has%
\begin{eqnarray*} E\left\| u^{+}\right\| _{\infty ,\infty ;t}^p&\le &k(t)c(p)E\big(\left\|\xi^+-S'_0\right\|^p_\infty+(\left\| \bar{f}^{0,+}\right\|^*_{\theta;t})^p+
(\left\||\bar{g}^0|^2\right\|^*_{\theta;t})^{\frac{p}{2}}+(\left\||\bar{h}^0|^2\right\|^*_{\theta;t})^{\frac{p}{2}}\\&+&\left\|(S'_0)^+\right\|^p_\infty+(\left\|f^{',+}\right\|^*_{\theta;t})^p+
(\left\||g'|^2\right\|^*_{\theta;t})^{\frac{p}{2}}+(\left\||h'|^2\right\|^*_{\theta;t})^{\frac{p}{2}}\big)\end{eqnarray*}
where $k\left( t\right) $ is constant that depends on the structure
constants and $t\in [0,T].$
\end{theorem}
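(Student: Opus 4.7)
The strategy is to split $u^+ \le (u-S')^+ + (S')^+$ and bound the two pieces separately. The central observation is that the regular measure $\nu$ is supported on $\{\tilde u = S\}$; since by Assumption \textbf{(OL)} one has $S \le S'$ quasi-everywhere, the support of $\nu$ is contained in $\{\tilde u - S' \le 0\}$. Hence for any $\mathcal{C}^2$ non-decreasing $\Phi\ge 0$ with $\Phi'$ vanishing on $(-\infty,0]$, the Itô formula of Theorem \ref{Itoformula} applied to $\Phi(v)$ with $v := u - S'$ produces the term $\int_0^t\int_\cO \Phi'(\tilde v_s(x))\,\nu(dx,ds) = 0$. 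Thus the obstacle measure drops out of any positive-part analysis of $v$.

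I would first derive the SPDE satisfied by $v = u - S'$: subtracting from \eqref{SPDEO} the linear equation solved by $S'$ shows that $v$ is a local solution of an obstacle SPDE with initial condition $\xi - S'_0$, coefficients $(\bar f, \bar g, \bar h)$, obstacle $S-S' \le 0$, and the same regular measure $\nu$. Using $v^+ \le u^+ + (S')^-$ together with $u^+ \in \mathcal{H}_T$ and the integrability assumptions on $S'$, one verifies $v^+ \in \mathcal{H}_T$. Next, following the scheme of Proposition 5.2 and Theorem 5.5 in \cite{DMZ12max}, I would apply Theorem \ref{Itoformula} to a smooth approximation $\Phi_n$ of $y \mapsto (y^+)^{2\gamma}$ for the exponents $\gamma$ needed in Moser iteration, and pass to the limit. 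The elliptic form $\int_0^t\int_\cO a_{i,j}(s,x)\Phi_n''(v_s)\partial_i v_s\,\partial_j v_s\,dx\,ds$ is bounded below by $\lambda$ times its scalar counterpart, thanks to the uniform ellipticity in \eqref{operator}. The nonlinear contributions of $\bar f,\bar g,\bar h$ are controlled, via their Lipschitz bounds and the duality inequality \eqref{Holder2}, by the $\|\cdot\|_{\#;t}^*$-norms of $\bar f^{0,+},|\bar g^0|^2,|\bar h^0|^2$ and the $\#$-norms of $v^+$ and $|\nabla v^+|$. The structural inequality $\alpha+\beta^2/2+72\beta^2<\lambda$ provides the slack needed to absorb the gradient and stochastic-integral contributions into the elliptic form after a Burkholder--Davis--Gundy step (the factor $72$ comes from bounding the martingale $\sum_j\int\Phi_n'(v_s)h_j\,dB^j_s$ uniformly in time).

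A Moser iteration in the $L^{p,q}$ scale, using the Sobolev embedding $\|\cdot\|_{2^*}\le c_S\|\nabla\cdot\|_2$ from Section \ref{lpq} to lift the base $\#$-estimate to an $L^{\infty,\infty}$ bound, then yields the portion of the estimate involving $\|(\xi-S'_0)^+\|_\infty$, $\|\bar f^{0,+}\|_{\theta;t}^*$, $\||\bar g^0|^2\|_{\theta;t}^*$ and $\||\bar h^0|^2\|_{\theta;t}^*$. For the second piece $(S')^+$, since $S'$ is a local solution of a \emph{linear} SPDE without obstacle, the same Itô--Moser technique applies in its simpler obstacle-free form and produces the complementary terms $\|(S'_0)^+\|_\infty$, $\|f'^{,+}\|_{\theta;t}^*$, $\||g'|^2\|_{\theta;t}^*$ and $\||h'|^2\|_{\theta;t}^*$. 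Combining the two bounds by the triangle inequality for $\|\cdot\|_{\infty,\infty;t}$ and the elementary inequality $(a+b)^p\le 2^{p-1}(a^p+b^p)$ concludes. The main obstacle is the careful bookkeeping of constants in the Moser iteration under the time-dependent coefficient $a(t,x)$; however, since \eqref{operator} provides uniform ellipticity in $t$ and Theorem \ref{Itoformula} already accommodates both $\partial_t\Phi$ and the non-homogeneous elliptic form, the homogeneous argument of \cite{DMZ12max} transfers with only cosmetic modifications.
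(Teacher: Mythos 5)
Your proposal reconstructs the argument that the paper only references: the paper's ``proof'' of this theorem is essentially the single sentence ``By adapting the proof of Theorem 5.5 in \cite{DMZ12max}, we get\dots'', preceded by the indication that one must first transfer Proposition 5.2 (estimate on the positive part with null Dirichlet condition) and Proposition 5.3 (It\^o formula for the difference of two elements of $\mathcal{R}_{loc}$) of \cite{DMZ12max} to the non-homogeneous setting. Your structure — shift by $S'$, kill the $\nu$-term via the Skorohod condition together with $S\le S'$, then run a Moser iteration on the shifted quantity, and bound $(S')^+$ separately by the obstacle-free $L^{\infty,\infty}$ estimate — matches that scheme, and the two blocks of terms in the claimed estimate do indeed correspond to your two-piece decomposition $u^+\le (u-S')^+ + (S')^+$. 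Your explanations of the roles of the uniform ellipticity \eqref{operator} and of the strengthened contraction $\alpha+\beta^2/2+72\beta^2<\lambda$ are plausible.

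Two points deserve more care. First, your claim that $v^+\in\cH_T$ follows from $v^+\le u^+ + (S')^-$ is not a proof: that is only a pointwise bound, and since $S'$ is merely a \emph{local} solution under \textbf{(OL)}, $(S')^-$ need not lie in $H_0^1(\cO)$, so the domination does not by itself give the trace-zero property required for $\cH_T$. The way \cite{DMZ12max} actually obtains global integrability on $(u-S')^+$ is through the preliminary ``Proposition 5.2''-type estimate applied after a spatial cutoff, using $u^+\in\cH_T$ to remove the cutoff; you should either reproduce that localisation or give an independent argument that $(u-S')^+$ has the right boundary behaviour. Second, the It\^o formula you need to apply to $\Phi(u-S')$ is not Theorem~\ref{Itoformula} itself but the version for the difference of a solution of the OSPDE and a solution of a linear SPDE (Proposition 5.3 of \cite{DMZ12max}), which the paper explicitly flags as one of the ingredients that must first be adapted to the time-dependent operator. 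Aside from these, your outline is consistent with the paper's intended proof.
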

As in the homogeneous case (see Theorem 5.6 in \cite{DMZ12max}), we can generalize the previous result by considering a real It\^o
process of the
form$$M_t=m+\int_0^tb_sds+\sum_{j=1}^{+\infty}\int_0^t\sigma_{j,s}dB_s^j$$
where $m$ is a random variable and $b=(b_t)_{t\geq0}$,
$\sigma=(\sigma_{1,t},...,\sigma_{n,t},...)_{t\geq0}$ are adapted processes.
\begin{theorem}
\label{maintheo} Assume that $\partial \cO$ is Lipschitz and suppose that Assumptions {\bf (H)}, {\bf (OL)},
{\bf (HOL)},{\bf (HI$\mathbf{2 p}$)}, {\bf (HO$\mathbf{\infty p}$)}
and {\bf (HD$\mathbf{\theta p}$)} hold for some $\theta \in [0,1[$,
$p\geq2$ and that the constants of the Lipschitz conditions satisfy
$$\alpha +\frac{\beta ^2}2+72\beta ^2<\lambda. $$ Assume also that
$m$ and the processes $b$ and $\sigma $
satisfy the following integrability conditions%
$$ E\left| m\right| ^p<\infty ,\;E\left( \int_0^t\left| b_s\right|
^{\frac 1{1-\theta }}ds\right) ^{p\left( 1-\theta \right) }<\infty
,\;E\left(
\int_0^t\left| \sigma _s\right| ^{\frac 2{1-\theta }}ds\right) ^{\frac{%
p\left( 1-\theta \right) }2}<\infty , $$ for each $t\in[0,T].$ Let
$(u,\nu)\in {\cal R}_{loc}\left( \xi ,f,g,h,S\right) $ be such that
$\left( u-M\right) ^{+}$ belongs to $\HH_T$.
Then one has
\begin{eqnarray}\label{MaxiPrinc}E\left\|(u-M)^+\right\|^p_{\infty,\infty;t}&\leq& c(p)k(t)E\big[\left\|(\xi-m)^+-(S'_0-m)\right\|_\infty^p+
\left(\left\|\bar{f}^{0,+}\right\|_{\theta;t}^*\right)^p\nonumber\\&+&\left(\left\|\left|\bar{g}^0\right|^2\right\|_{\theta;t}^*\right)^{\frac{p}{2}}
+\left(\left\|\left|\bar{h}^0\right|^2\right\|_{\theta;t}^*\right)^{\frac{p}{2}}+\left\|(S'_0-m)^+\right\|^p_\infty\\&+&\left(\left\|(f'-b)^+\right\|^*_{\theta;t}\right)^p+\left(\left\||g'|^2\right\|^*_{\theta;t}\right)^{\frac{p}{2}}+\left(\left\||h'-\sigma|^2\right\|^*_{\theta;t}\right)^{\frac{p}{2}}\big]\nonumber\end{eqnarray}
where $k\left( t\right) $ is the constant from the
preceding corollary. The right hand side of this estimate is dominated by the following quantity which is expressed directly in terms of the characteristics of the process $M$,
\begin{eqnarray*}
&&c(p)k(t)E\big[\left\|(\xi-m)^+-(S'_0-m)\right\|^p_\infty+\left(\left\|\bar{f}^{0,+}\right\|_{\theta;t}^*\right)^p
+\left(\left\|\left|\bar{g}^0\right|^2\right\|_{\theta;t}^*\right)^{\frac{p}{2}}+\left(\left\|\left|\bar{h}^0\right|^2\right\|_{\theta;t}^*\right)^{\frac{p}{2}}\\&&
\quad\qquad\quad+\left\|(S'_0-m)^+\right\|^p_\infty+\left(\left\|f^{',+}\right\|_{\theta;t}^*\right)^p
+\left(\left\|\left|g'\right|^2\right\|_{\theta;t}^*\right)^{\frac{p}{2}}+\left(\left\|\left|h'\right|^2\right\|_{\theta;t}^*\right)^{\frac{p}{2}}
\\&&\quad\qquad\quad+\left(\int_0^t\left|b_s\right|^{\frac{1}{1-\theta}}ds\right)^{p(1-\theta)}+\left(\int_0^t\left|\sigma_s\right|^{\frac{2}{1-\theta}}ds\right)^{\frac{p(1-\theta)}{2}}
\big].
\end{eqnarray*}
\end{theorem}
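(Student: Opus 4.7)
The plan is to reduce to the preceding maximum principle Theorem \ref{maxprinc} (the case $u\leq 0$ on $\partial\cO$) by shifting the solution by the It\^o process $M$. Define $v_t(x) := u_t(x)-M_t$ and $\tilde{S}:=S-M$. Since $M$ does not depend on the space variable, $\nabla M\equiv 0$, so applying the It\^o formula implicit in (\ref{solutionlocal}) gives that $(v,\nu)$ is a local solution of an OSPDE with initial condition $\xi-m$, obstacle $\tilde{S}$, and modified coefficients
\[ \tilde{f}(t,x,y,z)=f(t,x,y+M_t,z)-b_t,\quad \tilde{g}(t,x,y,z)=g(t,x,y+M_t,z),\quad \tilde{h}_j(t,x,y,z)=h_j(t,x,y+M_t,z)-\sigma_{j,t}.\]
The Lipschitz constants $C,\alpha,\beta$ are preserved, hence the contraction condition $\alpha+\beta^2/2+72\beta^2<\lambda$ still holds. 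Moreover $\tilde{S}\leq S'-M =: \tilde{S}'$, and $\tilde{S}'$ is itself a local solution of the linear SPDE driven by $(f'-b,\,g',\,h'-\sigma)$ with initial value $S'_0-m$, so assumption \textbf{(OL)} is inherited. By hypothesis $(u-M)^+=v^+\in\HH_T$, so the structural assumption of Theorem \ref{maxprinc} is met.

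Next, I would compute the ``bar'' coefficients associated with the shifted problem and check that they coincide with the original ones: a direct substitution gives, for instance,
\[ \bar{\tilde{f}}(t,x,y,z)=\tilde{f}(t,x,y+\tilde{S}'_t,z+\nabla \tilde{S}'_t)-(f'_t-b_t)=f(t,x,y+S'_t,z+\nabla S'_t)-f'_t=\bar{f}(t,x,y,z),\]
and likewise $\bar{\tilde{g}}=\bar{g}$, $\bar{\tilde{h}}=\bar{h}$. Consequently \textbf{(HD$\mathbf{\theta p}$)} is unchanged, and \textbf{(HI$\mathbf{2p}$)} and \textbf{(HO$\mathbf{\infty p}$)} for the shifted data follow from the original assumptions together with the integrability of $m$, $b$, $\sigma$. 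Applying Theorem \ref{maxprinc} to $(v,\nu)$ directly produces estimate (\ref{MaxiPrinc}), since $\tilde{\xi}-\tilde{S}'_0=(\xi-m)-(S'_0-m)$, the driving coefficient of $\tilde{S}'$ in the ``$f$-slot'' is $f'-b$, and that in the ``$h$-slot'' is $h'-\sigma$.

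For the second, more explicit bound, I would split the perturbed terms on the right-hand side via the triangle inequality in the $L_{\theta;t}^{*}$ norm:
\[ \|(f'-b)^+\|_{\theta;t}^{*}\leq \|f^{',+}\|_{\theta;t}^{*}+\|b\|_{\theta;t}^{*},\qquad \||h'-\sigma|^2\|_{\theta;t}^{*}\leq 2\||h'|^2\|_{\theta;t}^{*}+2\||\sigma|^2\|_{\theta;t}^{*},\]
and then control the space-independent functions $b$ and $|\sigma|^2$ in this norm by their $L^{1/(1-\theta)}([0,t])$, respectively $L^{1/(1-\theta)}([0,t])$-type norms, using the volume bound on the bounded domain $\cO$. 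Raising to the appropriate power and taking expectation yields the two integral terms $E(\int_0^t|b_s|^{1/(1-\theta)}ds)^{p(1-\theta)}$ and $E(\int_0^t|\sigma_s|^{2/(1-\theta)}ds)^{p(1-\theta)/2}$ appearing in the displayed upper bound.

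The step I expect to be the most delicate is the first one: verifying rigorously that $v=u-M$ satisfies the OSPDE formulation (\ref{solutionlocal}) with the modified coefficients, since $M$ is not itself the solution of an SPDE and the measure $\nu$ (which is supported on $\{\tilde{u}=S\}=\{\tilde{v}=\tilde{S}\}$) must be seen to play exactly the obstacle-measure role for the shifted problem. Once this is established, the rest follows by a direct application of Theorem \ref{maxprinc} and elementary estimates on the $L^{*}_{\#;t}$ norms, paralleling Theorem 5.6 in \cite{DMZ12max}.
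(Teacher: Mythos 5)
Your approach is correct and is essentially the one the paper intends: shift the solution by the It\^o process $M$, observe that $v=u-M$ (with the same measure $\nu$) is a local solution of an OSPDE with initial datum $\xi-m$, obstacle $S-M\leq S'-M$, and coefficients $\tilde f=f(\cdot,\cdot+M,\cdot)-b$, $\tilde g=g(\cdot,\cdot+M,\cdot)$, $\tilde h=h(\cdot,\cdot+M,\cdot)-\sigma$, that the ``bar'' coefficients are invariant under this shift, and then invoke Theorem~\ref{maxprinc}. The paper itself does not spell this out but points to Theorem~5.6 of \cite{DMZ12max}, where exactly this reduction is carried out, so your argument parallels the paper's proof. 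Your verification that $\bar{\tilde f}=\bar f$, $\bar{\tilde g}=\bar g$, $\bar{\tilde h}=\bar h$ is correct, and so is the observation that $\tilde S'=S'-M$ is driven by $(f'-b,\,g',\,h'-\sigma)$ from $S'_0-m$, which yields the terms $\|(f'-b)^+\|^*_{\theta;t}$ and $\||h'-\sigma|^2\|^*_{\theta;t}$ in \eqref{MaxiPrinc}. The second displayed bound then follows, as you say, by triangle inequality in $L^*_{\theta;t}$ and by estimating the space-constant functions $b$ and $\sigma$ through their time $L^{1/(1-\theta)}$ and $L^{2/(1-\theta)}$ norms using the boundedness of $\cO$. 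The one point you flag as delicate — that $v=u-M$ with the same $\nu$ solves \eqref{solutionlocal} for the shifted data — is handled exactly as you suggest, by writing $(M_t,\varphi_t)=M_t\int_\cO\varphi_t\,dx$ and applying the finite-dimensional It\^o formula to this scalar product, which works because $M$ is space-independent and does not disturb the divergence-form and gradient terms.
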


\begin{center}
\begin{minipage}[t]{7cm}
Laurent DENIS \\
Laboratoire d'Analyse et Probabilit\'es\\
 Universit{\'e} d'Evry Val
d'Essonne\\
Rue du P\`ere Jarlan\\
 F-91025 Evry Cedex, FRANCE\\
 e-mail: ldenis{\char'100}univ-evry.fr
\end{minipage}
\hfill
\begin{minipage}[t]{7cm}
 Anis MATOUSSI \\
 LUNAM Université, Université du Maine\\
 Fédération de Recherche 2962 du CNRS\\
 Mathématiques des Pays de Loire \\
Laboratoire Manceau de Mathématiques\\
 Avenue Olivier Messiaen\\ F-72085 Le Mans Cedex 9, France \\
email : anis.matoussi@univ-lemans.fr\\
and \\
CMAP,  Ecole Polytechnique, Palaiseau
\end{minipage}
\hfill \vspace*{0.8cm}
\begin{minipage}[t]{12cm}
Jing ZHANG \\
Laboratoire d'Analyse et Probabilit\'es\\
 Universit{\'e} d'Evry Val
d'Essonne\\
Rue du P\`ere Jarlan\\
 F-91025 Evry Cedex, FRANCE\\
Email: jing.zhang.etu@gmail.com

\end{minipage}
\end{center}

\end{document}